     \def\section{\@startsection{section}{1}%
     \z@{.7\linespacing\@plus\linespacing}{.5\linespacing}%
     {\bfseries
     \centering
     }}
     \def\@secnumfont{\bfseries}
\newtheorem{theorem}{Theorem}[section]
\newtheorem{lemma}[theorem]{Lemma}
\newtheorem{corollary}[theorem]{Corollary}
\theoremstyle{definition}
\theoremstyle{remark}
\newtheorem{remark}[theorem]{Remark}
\numberwithin{equation}{section}
\def \a{{\alpha}}
\def \d{{\delta}}
\def \e{{\varepsilon}}
\def \g{{\gamma}}
\def \l{{\lambda}}
\def \O{{\Omega}}
\def \t{{\vartheta}}
\def \m{{\mu}}
\def \s{{\sigma}}
\def \P{{\bf P}}
\def \qq{{\qquad}}
\def \R{{\bf R}}
 \def \dd{{\rm d}}
\def \noi{{\noindent}}
\def\E{{\mathbb E \,}}
\def\P{{\mathbb P}}
\def\R{{\mathbb R}}
\def\C{{\mathbb C}}  
  \title[Cauchy Means of Dirichlet polynomials]{Cauchy Means of Dirichlet polynomials}
\begin{document}  \author{\rm       Michel J.\,G. Weber}
 \address{IRMA, Universit\'e Louis-Pasteur et C.N.R.S.,   7  rue Ren\'e Descartes, 67084 Strasbourg Cedex, France. 
  \noi E-mail:    {\tt  michel.weber@math.unistra.fr }}


 \keywords{Dirichlet polynomials, Cauchy density, arctangent density, mean-value, Mellin transform, spectral theory, homogeneous kernels, Brownian motion}

  \begin{abstract} We study Cauchy means of
Dirichlet polynomials   
$$\int_\R  \Big|\sum_{n=1}^N 
\frac{1}{ n^{\s+ ist}}   \Big|^{2q}  
 \frac{\dd t}{\pi( t^2+1)}.$$ 
 These integrals were investigated when $q=1,\s= 1, s=1/2 $ by Wilf, using integral operator theory and Widom's eigenvalue estimates.
 We show the optimality of some upper bounds obtained by Wilf. We also obtain new
estimates for the case $q\ge 1$, $\s\ge 0$ and $s>0$. We complete  Wilf's approach by relating it with other approaches  (having notably  connection with Brownian motion), allowing simple proofs, and also  prove new  results. 
\end{abstract}
 
 \maketitle 
 

\section{\bf Introduction and Main Results.} 
 In a quite   inspiring  paper \cite{Wi}, Wilf has considered   integral  operators  associated with homogeneous, nonnegative  kernels 
$K(x,y)$ and applied his results to  Dirichlet series. 
Consider for instance the kernel $K(x,y)=\max(x,y)^{-1}$. It
has   Mellin  transform
$$\mathcal F(s)=\int_0^\infty t^{-s}K(t,1) \dd t= \frac{1}{s}+\frac{1}{1-s}, $$
$s=\s+it$, which is invertible on the critical line. As further $K(x,y)$ is symmetric and decreasing, it is well-known   in this case  that  the spectral theory of
$K(x,y)$   depends on the  behavior of the Mellin  transform of $K(t,1)$ along the critical line. 

\vskip 2 pt If
$x_1, \ldots , x_N$ are complex numbers, then (\cite{Wi}, Theorem 3)
\begin{equation}\label{Wilf}\sum_{n,m=1}^N \bar x_nK(n,m)x_m=\frac{1}{2\pi}\int_\R \mathcal F(\frac{1}{2}+ it)\Big|\sum_{n=1}^N 
\frac{x_n}{ n^{\frac{1}{2}+ it}}   \Big|^2 \dd t\le 
\mathcal F(\frac{1}{2} 
 )\sum_{j=1}^N | x_n|^2.
\end{equation}
 The last inequality follows from Widom's  eigenvalue estimate   (\cite{Wi},  Theorem  2). Wilf has shown that (\ref{Wilf}) holds for the class $\mathcal
H$ of kernels $K$ such that
$K(x,y)\ge 0$ for $x,y$ nonnegative, and is further symmetric,  decreasing and homogeneous of degree $-1$: for every $\a>0$ we
 have 
 \begin{equation}\label{ho-1} K(\a x, \a y)= \a^{-1}K(x,y) \qq \quad \forall x>0,\forall y>0.
 \end{equation}
 
In the case considered, (\ref{Wilf}) implies that
 \begin{equation}\label{Wilf1}  \int_\R   \Big|\sum_{n=1}^N 
\frac{x_n}{ n^{\frac{1}{2}+ it}}   \Big|^2 \frac{\dd t  }{\frac{1}{4}+  t^2} \le 
8\pi\sum_{n=1}^N | x_n|^2.
\end{equation}
Taking $x_n= n^{-1/2}$ yields in particular the following nice bound (\cite{Wi}, (17))
 \begin{equation}\label{Wilf2}  \int_\R   \Big|\sum_{n=1}^N 
\frac{1}{ n^{1+ it}}   \Big|^2 \frac{\dd t  }{\frac{1}{4}+  t^2} \le 
8\pi\sum_{n=1}^N \frac{1  }{n}\le C \log N.
\end{equation}

That  inequality   is in turn  two-sided and this can
be showed     without appealing to Mellin
transform nor Widom's eigenvalue  estimate. 
 The purpose of this Note is to first  relate  Wilf's approach with other approaches  allowing simple proofs, and next,  to develop more some parts and  prove new  results.  
 The above integrals   are  Cauchy means on the real line of Dirichlet polynomials,  
  and admit an exact formulation. This is in contrast with   usual mean-value  of Dirichlet polynomials, with respect to   measures
$\chi_{[0,T]}(t)\dd t/T$,      where  an error term  
always occurs due to the fact that 
\begin{equation}\label{omn}\int_{0}^T \Big(\frac{m}{n}\Big)^{it} \dd t=\begin{cases} \, T\qq &{\rm if} \ m=n\cr 
 \,\mathcal O_{m,n}(1)\qq &{\rm otherwise}.
\end{cases}\end{equation}   

 Both means are in turn strongly related. Cauchy means of Dirichlet polynomials are part of the theory of Dirichlet polynomials for various weights and it is expected that their study will give new insight into properties of  general Dirichlet polynomials. We refer for instance to the recent works of Lubinsky \cite{L1, L2}, 
 [which we discovered while this work was much advanced]. 
 \vskip 2 pt 
 As the weight  functions in turn represent a sampling of the parameter $t$, the properties of the weighted Dirichlet approximating polynomials can be used to study  the behavior of the Riemann zeta function $\zeta(\s + it)$ along the critical line $\s=1/2$. A (rather) elaborated application  of this, in the case of the Cauchy density, can be found in  Lifshits and Weber \cite{LW}.
 \vskip 2 pt 
We begin with giving  proofs of (\ref{Wilf1}), (\ref{Wilf2}) without appealing to spectral theory (Widom's eigenvalue  estimate). 

\subsection{Proof of  (\ref{Wilf2}) using Cauchy means} We start with an elementary lemma.   
 \begin{lemma}\label{debut} Let  $s\in \R_+$ and $x_1,\ldots, x_N$,
$y_1,\ldots, y_N$ be complex numbers. We have
\begin{eqnarray*} \int_\R \Big|\sum_{\nu=1}^My_\nu \nu^{ i s t}\Big|^2
\Big|\sum_{n=1}^Nx_n n^{ i s t}\Big|^2\frac{\dd t}{\pi( t^2+1)}&=&  \sum_{\m,\nu=1}^M\sum_{m,n=1}^N \overline{y _\m x _m}\, y_\nu  x_n \Big(\frac{n\nu 
 \wedge m\mu}{n\nu  \vee m\mu }\Big)^s
\end{eqnarray*}
Moreover, 
\begin{eqnarray*} \lim_{s\to \infty}\int_\R \Big|\sum_{\nu=1}^My_\nu \nu^{ i s t}\Big|^2
\Big|\sum_{n=1}^Nx_n n^{ i s t}\Big|^2\frac{\dd t}{\pi( t^2+1)}&=&  \sum_{{1\le \m,\nu\le M\atop 1\le m,n\le N}\atop n\nu=m\m} \overline{y _\m x _m}\, y_\nu  x_n .
\end{eqnarray*}
 \end{lemma}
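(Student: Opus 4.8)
The plan is to expand the two squared moduli into finite double sums, multiply them, and integrate against the Cauchy density term by term, the only genuine analytic ingredient being the characteristic function of the standard Cauchy law.

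First I would write, for the positive integers involved,
$$\Big|\sum_{\nu=1}^M y_\nu \nu^{ist}\Big|^2 = \sum_{\mu,\nu=1}^M \overline{y_\mu}\,y_\nu \Big(\frac{\nu}{\mu}\Big)^{ist}, \qquad \Big|\sum_{n=1}^N x_n n^{ist}\Big|^2 = \sum_{m,n=1}^N \overline{x_m}\,x_n \Big(\frac{n}{m}\Big)^{ist},$$
so that the product of the two is $\sum_{\mu,\nu,m,n}\overline{y_\mu x_m}\,y_\nu x_n\,\big(n\nu/(m\mu)\big)^{ist}$. Since all four sums are finite, they may be pulled out of $\int_\R (\cdot)\,\dd t/(\pi(t^2+1))$, and everything reduces to evaluating, for each quadruple $(\mu,\nu,m,n)$, the integral $\int_\R \big(n\nu/(m\mu)\big)^{ist}\,\dd t/(\pi(t^2+1))$.

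Writing $\big(n\nu/(m\mu)\big)^{ist}=e^{i\theta t}$ with $\theta = s\log\!\big(n\nu/(m\mu)\big)$, the key input is that $\int_\R e^{i\theta t}\,\dd t/(\pi(t^2+1)) = e^{-|\theta|}$ for every real $\theta$ (the characteristic function of the arctangent/Cauchy density, obtained by a one‑line residue computation or simply quoted). Since $s\ge 0$ this equals $e^{-s|\log(n\nu/(m\mu))|}=\big(e^{-|\log(n\nu/(m\mu))|}\big)^{s}$, and the elementary identity $e^{-|\log(p/q)|}=(p\wedge q)/(p\vee q)$ for positive reals $p,q$ (the case $n\nu=m\mu$, i.e. $\theta=0$, giving the value $1$, being consistent) turns this into $\big((n\nu\wedge m\mu)/(n\nu\vee m\mu)\big)^{s}$. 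Re‑summing over $(\mu,\nu,m,n)$ yields the first identity.

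For the second identity I would not try to pass to the limit inside the integral — the integrand $\big|\sum y_\nu\nu^{ist}\big|^2\big|\sum x_n n^{ist}\big|^2$ has no pointwise limit in $t$ as $s\to\infty$ — but instead pass to the limit in the finite sum on the right‑hand side already obtained: each term $\big((n\nu\wedge m\mu)/(n\nu\vee m\mu)\big)^{s}$ tends to $0$ when $n\nu\neq m\mu$ (its base lies strictly in $[0,1)$) and equals $1$ when $n\nu=m\mu$; as the sum has only $M^2N^2$ terms, the limit is exactly the stated sum over $\{\,n\nu=m\mu\,\}$. I do not expect any real obstacle here: the statement is "elementary," as advertised, the one point worth stating cleanly being the Cauchy Fourier identity together with the reduction $e^{-s|\log(p/q)|}=\big((p\wedge q)/(p\vee q)\big)^{s}$; everything else is the bookkeeping of expanding the squares.
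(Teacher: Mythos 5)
Your proposal is correct and follows essentially the same route as the paper: expand both squared moduli, integrate term by term using the Cauchy characteristic function $\int_\R e^{i\theta t}\,\dd t/(\pi(t^2+1))=e^{-|\theta|}$ to produce the factors $\big((n\nu\wedge m\mu)/(n\nu\vee m\mu)\big)^s$, and obtain the limit from the finitely many terms, the off-diagonal ones tending to $0$. The only cosmetic difference is that the paper packages the limit step as a uniform bound $(MN)^2\,\mathcal O(\d^s)$ with $0<\d<1$, which is the same observation as your termwise argument.
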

\begin{remark} The last assertion implies   that
\begin{eqnarray*} \lim_{s\to \infty}\int_\R\Big|\sum_{n=1}^Nx_n n^{ i s t}\Big|^2\frac{\dd t}{\pi( t^2+1)}&=&  \sum_{n=1}^N   |x_n|^2\cr  \lim_{s\to \infty}\int_\R \Big|\sum_{n=1}^Nx_n n^{ i s t}\Big|^4\frac{\dd t}{\pi( t^2+1)}&=&  \sum_{{1\le \m,\nu\le N\atop 1\le m,n\le N}\atop n\nu=m\m} \overline{x _\m x _m}\, x_\nu  x_n .
\end{eqnarray*}Taking $x_n=n^{-\s}$ yields,
\begin{eqnarray*} \lim_{s\to \infty}\int_\R\Big|\sum_{n=1}^N\frac{1}{n^{ \s +i s t}}\Big|^2\frac{\dd t}{\pi( t^2+1)}&=&  \sum_{n=1}^N   \frac{1}{n^{2 \s }}\cr  \lim_{s\to \infty}\int_\R \Big|\sum_{n=1}^N\frac{1}{n^{ \s +i s t}}\Big|^4\frac{\dd t}{\pi( t^2+1)}&=&  \sum_{{1\le \m,\nu\le N\atop 1\le m,n\le N}\atop n\nu=m\m}   \frac{1}{(\m m\nu n)^{\s}}.
\end{eqnarray*}
And in particular, by using Ayyad, Cochrane and Zheng estimate \cite{ACZ}, Theorem 3,
\begin{eqnarray*}  \lim_{s\to \infty}\int_\R \Big|\sum_{n=1}^N\frac{1}{n^{i s t}}\Big|^4\frac{\dd t}{\pi( t^2+1)}&=&  \#\Big\{1\le \m,\nu\le N, 1\le m,n\le N : n\nu=m\m\Big\}\cr &=& \frac{12}{\pi^2}N^2\log N + CN^2 + \mathcal O \big( N^{19/13}\log^{7/13} N\big),
\end{eqnarray*}where $C= \frac{2}{\pi^2}(12\g -(\frac{36}{\pi^2}\zeta'(2)-3)-2$, $\g$ is Euler's constant and  $\zeta'(2)= \sum_{n=1}^\infty  \frac{\log n}{n^2}. $
 \end{remark}
 \begin{proof}
 From the relation    
  $ e^{-|\t|}= \int_\R  e^{i\t t}\frac{\dd t}{\pi( t^2+1)}$, 
it follows that 
\begin{eqnarray} \label{cor1}\Big(\frac{n }{m}\Big)^s=\int_\R 
 \frac{1}{  n^{ ist}m^{-ist}}\frac{\dd t}{\pi( t^2+1)} \qq \qquad (m\ge n). 
\end{eqnarray}  
 Thus
\begin{eqnarray*} \int_\R \Big|\sum_{\nu=1}^My_\nu \nu^{ i s t}\Big|^2
 \frac{1}{  n^{ i st}m^{-is t}}\frac{\dd t}{\pi( t^2+1)}&=&\sum_{\m,\nu=1}^M \overline{y}_\m y_\nu\int_\R  \frac{1}{ (n \nu)^{
ist}(m\mu)^{-ist}}
  \frac{\dd t}{\pi( t^2+1)}\cr &=&  \sum_{\m,\nu=1}^M \overline{y}_\m y_\nu\Big(\frac{n\nu  \wedge m\mu}{n\nu  \vee m\mu }\Big)^s .
\end{eqnarray*} 
 Consequently
\begin{eqnarray*} \int_\R \Big|\sum_{\nu=1}^My_\nu \nu^{ i s t}\Big|^2
\Big|\sum_{n=1}^Nx_n n^{ i s t}\Big|^2\frac{\dd t}{\pi( t^2+1)}&=&\sum_{\m,\nu=1}^M\sum_{m,n=1}^N \overline{y}_\m y_\nu\int_\R  \frac{1}{ (n \nu)^{ ist}(m\mu)^{-ist}}
  \frac{\dd t}{\pi( t^2+1)}\cr &=&  \sum_{\m,\nu=1}^M\sum_{m,n=1}^N \overline{y _\m x _m}\, y_\nu  x_n \Big(\frac{n\nu 
 \wedge m\mu}{n\nu  \vee m\mu }\Big)^s.
\end{eqnarray*}
  The second assertion follows easily. Let 
  $$\d=   \max_{{1\le \m,\nu\le M\atop 1\le m,n\le N}\atop n\nu\neq m\m}\Big(\frac{n\nu  \wedge m\mu}{n\nu  \vee m\mu }\Big)^s.$$
Then $0< \d<1$. And the conclusion follows from
\begin{eqnarray*} \int_\R \Big|\sum_{\nu=1}^My_\nu \nu^{ i s t}\Big|^2
\Big|\sum_{n=1}^Nx_n n^{ i s t}\Big|^2\frac{\dd t}{\pi( t^2+1)} &=&   \sum_{{1\le \m,\nu\le M\atop 1\le m,n\le N}\atop n\nu=m\m} \overline{y _\m x _m}\, y_\nu  x_n+ (MN)^2 \mathcal O(\d^s).
\end{eqnarray*}
  \end{proof}

 To recover (\ref{Wilf2}) and also to prove the corresponding lower bound, take $x_n= n^{-1}$, $M=1=y_1$ and $s=1/2$. We get
\begin{eqnarray*}\label{24}  \frac{1}{2\pi } \int_\R  \Big|\sum_{n=1}^N 
\frac{1}{ n^{1+ i \theta  }}   \Big|^2  
 \frac{\dd \theta}{ \frac{1}{4}+ \theta^2 } 
 &=&  \sum_{m,n=1}^N \frac{ 1 }{(m\wedge n)^{1/2} (m\vee n)^{3/2}}
 \cr&=& \sum_{ n=1}^N \frac{ 1 }{n^2}  +2 \sum_{ n=1}^N  \frac{ 1 }{n^{1/2} } \sum_{m=n+1}^N \frac{ 1 }{ m^{3/2}}
\cr&\le &  C
\Big(1+  
\sum_{ n=1}^N  \frac{ 1 }{n  } \Big)\le C\log N ,\end{eqnarray*}
which is (\ref{Wilf2}). And obviously, 
\begin{eqnarray*}\label{24}  \frac{1}{2\pi } \int_\R  \Big|\sum_{n=1}^N 
\frac{1}{ n^{1+ i \theta  }}   \Big|^2  
 \frac{\dd \theta}{ \frac{1}{4}+ \theta^2 } 
 &\ge &  2 \sum_{ n=1}^{N/2}  \frac{ 1 }{n^{1/2} } \sum_{m=n+1}^N \frac{ 1 }{ m^{3/2}}
\ge C\sum_{ n=1}^N  \frac{ 1 }{n  }  \ge C\log N .\end{eqnarray*}
 
\subsection{Proof of  (\ref{Wilf1}) using Brownian motion} 
Let  $W=\{W(t), t\ge 0\}$ be  standard one-dimensional Brownian motion
issued from $0$ at time $t=0$ and with underlying probability space $(\O, \mathcal A, \P)$.  Then
\begin{equation} \label{kb}  K(s,t)= \frac{(s\wedge t)}{ s t}=  \E \Big( \frac{W(s)}{
s}\frac{W(t)}{  t}\Big)\qq {\rm and}\qq \E \Big( \frac{W(s)}{
\sqrt s}\frac{W(t)}{ \sqrt t}\Big)=\Big(\frac{s\wedge t}{s\vee t} \Big)^{1/2}.
\end{equation} 
  This allows to interpret these integrals  as Brownian sums, and by using the independence of the increments of $W$, to find another convenient
reformulation.  
\begin{lemma}\label{w} For any real $s\ge 0$,
  \begin{eqnarray*} \int_\R\Big|\sum_{n=1}^Nx_n n^{ i s t}\Big|^2
 \frac{\dd t}{\pi( t^2+1)}&=& \E\Big|\sum_{n=1}^Nx_n\frac{W(n^{2s})}{
n^s}\Big|^2\cr&=& 
  \sum_{j=1}^N (j^{2s}-(j-1)^{2s})\Big|\sum_{\m=j}^N  \frac{x_\mu}{\mu ^s} \Big|^2
\cr \int_\R  \Big|\sum_{n=1}^Nx_n n^{ i s t}\Big|^4\frac{\dd t}{\pi( t^2+1)}&=& \E \Big|  \sum_{n,\nu=1}^N \, x_\nu  x_n \frac{W((n\nu)^{2s}) }{(n\nu) ^s }\Big|^2\cr &=& 
  \sum_{j=1}^{N^2} (j^{2s}-(j-1)^{2s})\Big|\sum_{1\le n,\nu\le N\atop n\nu\ge j}   \frac{ x_\nu  x_n}{(n\nu) ^s } \Big|^2. 
\end{eqnarray*}  
  \end{lemma}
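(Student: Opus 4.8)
The plan is to show that each side of the two asserted identities equals one and the same double (resp.\ quadruple) sum over the coefficients $x_n$, and then to re-sum the Brownian expression by freezing the value of the largest summation index.

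First I would record the two ``covariance'' identities on which everything rests. On the analytic side, from $e^{-|\t|}=\int_\R e^{i\t t}\,\dd t/\pi(t^2+1)$, equivalently from \eqref{cor1}, one has for all integers $n,m\ge1$ and all $s\ge 0$
\[
\int_\R n^{ist}m^{-ist}\,\frac{\dd t}{\pi(t^2+1)}=e^{-s|\log(n/m)|}=\Big(\frac{n\wedge m}{n\vee m}\Big)^s .
\]
On the probabilistic side, since $\E\big(W(a)W(b)\big)=a\wedge b$ and $s\ge0$ gives $n^{2s}\wedge m^{2s}=(n\wedge m)^{2s}$, the companion of \eqref{kb} reads
\[
\E\Big(\frac{W(n^{2s})}{n^s}\,\frac{W(m^{2s})}{m^s}\Big)=\frac{(n\wedge m)^{2s}}{(nm)^s}=\Big(\frac{n\wedge m}{n\vee m}\Big)^s .
\]
Expanding $\big|\sum_{n}x_n n^{ist}\big|^2=\sum_{n,m}x_n\overline{x_m}\,n^{ist}m^{-ist}$ and integrating term by term, and likewise expanding $\E\big|\sum_{n}x_n W(n^{2s})/n^s\big|^2=\sum_{n,m}x_n\overline{x_m}\,\E(\cdots)$, both sides collapse to $\sum_{n,m=1}^N x_n\overline{x_m}\big(\tfrac{n\wedge m}{n\vee m}\big)^s$; all sums being finite, the interchanges with $\int$ and $\E$ are immediate. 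This gives the first equality.

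Next I would pass to the independent increments of $W$. Put $\Delta_j:=W(j^{2s})-W\big((j-1)^{2s}\big)$ for $1\le j\le N$, with $W(0)=0$; these are independent and centred, with $\E\Delta_j^2=j^{2s}-(j-1)^{2s}$ and $\E(\Delta_j\Delta_k)=0$ for $j\ne k$. Since $W(n^{2s})=\sum_{j=1}^n\Delta_j$, interchanging the finite summations gives
\[
\sum_{n=1}^N\frac{x_n}{n^s}\,W(n^{2s})=\sum_{n=1}^N\frac{x_n}{n^s}\sum_{j=1}^n\Delta_j=\sum_{j=1}^N\Delta_j\sum_{\mu=j}^N\frac{x_\mu}{\mu^s},
\]
and taking $\E|\cdot|^2$ and using orthogonality of the $\Delta_j$ (valid for complex coefficients, the $\Delta_j$ being real, independent and centred) produces exactly $\sum_{j=1}^N(j^{2s}-(j-1)^{2s})\big|\sum_{\mu=j}^N x_\mu/\mu^s\big|^2$, the second equality.

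Finally, the two fourth-power identities are the two preceding steps applied to the pair index. Since $\big(\sum_{n=1}^N x_n n^{ist}\big)^2=\sum_{n,\nu=1}^N x_n x_\nu(n\nu)^{ist}$ and $|z|^4=|z^2|^2$, one has $\big|\sum_n x_n n^{ist}\big|^4=\big|\sum_{n,\nu=1}^N x_n x_\nu(n\nu)^{ist}\big|^2$. Treating $(x_n x_\nu)_{1\le n,\nu\le N}$ as a coefficient vector and the integer $n\nu\in\{1,\dots,N^2\}$ in the role played above by the single index $n$, the expansion-plus-covariance step (now with $\E(W((n\nu)^{2s})W((m\mu)^{2s}))=(n\nu\wedge m\mu)^{2s}$) yields the $\E$-formula, and decomposing $W$ along $0<1^{2s}<\cdots<(N^2)^{2s}$ as above yields the last display, the inner sum now running over $\{1\le n,\nu\le N:\ n\nu\ge j\}$. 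I do not expect a genuine obstacle: the one point requiring care throughout is the bookkeeping of ranges in the interchange $\sum_{n=1}^N\sum_{j=1}^n=\sum_{j=1}^N\sum_{n=j}^N$ and in its analogue that groups pairs $(n,\nu)$ by the value of $n\nu$.
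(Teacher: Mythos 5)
Your proposal is correct and follows essentially the same route as the paper: expand the square (resp.\ the fourth power via $|z|^4=|z^2|^2$) and match the Cauchy integral $\int_\R n^{ist}m^{-ist}\,\dd t/\pi(t^2+1)=\big(\tfrac{n\wedge m}{n\vee m}\big)^s$ term by term with the Brownian covariance $\E\big(W(n^{2s})W(m^{2s})\big)=(n\wedge m)^{2s}$, then decompose $W$ into independent increments $W(j^{2s})-W((j-1)^{2s})$ and use their orthogonality after the interchange $\sum_{n}\sum_{j\le n}=\sum_j\sum_{n\ge j}$ (with pairs $(n,\nu)$ grouped by the value of $n\nu$ in the fourth-power case). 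This is precisely the paper's argument, which invokes Lemma \ref{debut} and \eqref{kb} for the expansion step and carries out the increment computation for the fourth power.
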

  
\begin{proof} 
 The first equality follows   from Lemma \ref{debut} and   (\ref{kb}).
 As to the second one, write
$W((n\nu)^{2s})=
\sum_{j=0}^{n\nu} g_j$,  where  
$g_j=W(j^{2s})-W((j-1)^{2s})$,
$j\ge 1$, we also have
\begin{eqnarray*}
 \int_\R  
\Big|\sum_{n=1}^Nx_n n^{ i s t}\Big|^4\frac{\dd t}{\pi( t^2+1)}&=&  
    \sum_{n,\nu=1}^N \sum_{m,\mu=1}^N    \overline{x _\m x} _m \, x_\nu  x_n \Big(\frac{n\nu  \wedge m\mu}{n\nu  \vee m\mu
}\Big)^s\cr & =&\sum_{n,\nu=1}^N \sum_{m,\mu=1}^N
\overline{x _\m x} _m \, x_\nu  x_n\E\Big(\frac{W((n\nu)^{2s}) }{(n\nu) ^s }\frac{W((m\mu)^{2s}) }{ (m\mu)^s }\Big)  \cr   &= &
\E \Big|  \sum_{n,\nu=1}^N \, x_\nu  x_n \frac{W((n\nu)^{2s}) }{(n\nu) ^s }\Big|^2=\E \Big|
\sum_{n,\nu=1}^N  \frac{  x_\nu  x_n}{(n\nu) ^s }\sum_{j=1}^{n\nu} g_j\Big|^2 \cr&=& 
     \E
\Big|\sum_{j=1}^{N^2} g_j\sum_{1\le n,\nu\le N\atop n\nu\ge j}   \frac{ x_\nu  x_n}{(n\nu) ^s } \Big|^2
= 
  \sum_{j=1}^{N^2}   \Big|\sum_{1\le n,\nu\le N\atop n\nu\ge j}   \frac{ x_\nu  x_n}{(n\nu) ^s } \Big|^2\E g_j^2
\cr 
 &= & 
  \sum_{j=1}^{N^2} (j^{2s}-(j-1)^{2s})\Big|\sum_{1\le n,\nu\le N\atop n\nu\ge j}   \frac{ x_\nu  x_n}{(n\nu) ^s } \Big|^2
 . 
\end{eqnarray*}
 \end{proof}
  We now need a technical lemma. 
 \begin{lemma} \label{x}
 For any $s>0$ and complex numbers   $x_j$, $j=1,\ldots , N$,
 \begin{eqnarray*} \sum_{j=1}^N (j^{2s}-(j-1)^{2s})\Big|\sum_{\m=j}^N  \frac{x_\mu}{\mu ^s} \Big|^2&\le &  \begin{cases}C_s\sum_{\m=1}^N 
 {|x_\m|^2} \m^{  3/2-2s} &\quad {\rm if}\  0<s<1/4, \cr 
C\sum_{\m=1}^N 
 |x_\m|^2  \m \log \m&\quad {\rm if}\  s= 1/4\cr 
C_s\sum_{\m=1}^N 
 |x_\m|^2  \m &\quad {\rm if}\  s> 1/4    . 
\end{cases}\end{eqnarray*}  \end{lemma}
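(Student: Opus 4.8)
The plan is to estimate the left‑hand side by splitting the outer sum at the index closest to $\mu$, exactly as one does in the classical Hardy–type / Hilbert‑type inequality for sequences. Write $a_j = j^{2s}-(j-1)^{2s}$, so that $a_j \asymp s\, j^{2s-1}$ for $j\ge 1$, and expand
$$
\sum_{j=1}^N a_j \Big|\sum_{\mu=j}^N \frac{x_\mu}{\mu^s}\Big|^2
= \sum_{j=1}^N a_j \sum_{\mu,\nu\ge j} \frac{x_\mu \bar x_\nu}{(\mu\nu)^s}
= \sum_{\mu,\nu=1}^N \frac{x_\mu \bar x_\nu}{(\mu\nu)^s} \sum_{j\le \mu\wedge\nu} a_j.
$$
Since $\sum_{j\le k} a_j = k^{2s}$, the left‑hand side equals $\sum_{\mu,\nu} \dfrac{x_\mu\bar x_\nu\,(\mu\wedge\nu)^{2s}}{(\mu\nu)^s}$, which recovers the kernel $(\mu\wedge\nu /\mu\vee\nu)^s$ from Lemma~\ref{debut}; this identity is a sanity check but not yet a bound. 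To get the stated estimate I would instead bound $|x_\mu\bar x_\nu|\le \tfrac12(c_\mu |x_\mu|^2 + c_\nu^{-1}|x_\nu|^2)$ with a homogeneity‑adapted weight $c_\mu = (\mu/\nu)^{\kappa}$ for a suitable exponent $\kappa$, i.e. apply the Schur test to the matrix $\big((\mu\wedge\nu/\mu\vee\nu)^s\big)$ with test function $\phi(\mu)=\mu^{\kappa}$. The row sum is $\sum_{\nu} (\mu\wedge\nu/\mu\vee\nu)^s (\nu/\mu)^{\kappa}$; splitting at $\nu=\mu$ gives two geometric‑type sums $\sum_{\nu\le\mu}(\nu/\mu)^{s+\kappa}$ and $\sum_{\nu>\mu}(\mu/\nu)^{s-\kappa}$.

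The two sums converge (uniformly in $\mu$, up to the truncation at $N$) precisely when $s+\kappa>0$ and $s-\kappa>0$, i.e. $|\kappa|<s$; in the borderline directions one picks up a factor $\mu$ or $\log\mu$. Concretely: if $0<s<1/4$ I would choose $\kappa$ so that the row sum, weighted back by $\phi(\mu)^{-2}=\mu^{-2\kappa}$, produces the weight $\mu^{3/2-2s}$ — this forces $\kappa = s - 3/4 <0$, which is admissible iff $|s-3/4|<s$, i.e. $s>3/8$? — so in fact the Schur test in this crude form does not directly reach $s<1/4$, and one must be more careful. The honest route for $0<s<1/4$ is to go back to the split form: bound
$$
\Big|\sum_{\mu=j}^N \frac{x_\mu}{\mu^s}\Big|^2 \le \Big(\sum_{\mu=j}^N \mu^{-2s+\theta}\Big)\Big(\sum_{\mu=j}^N |x_\mu|^2 \mu^{-\theta}\Big)
$$
by Cauchy–Schwarz with a free parameter $\theta$, then use $\sum_{\mu\ge j}\mu^{-2s+\theta}\asymp j^{1-2s+\theta}$ (valid when $\theta<2s-1$, which is negative, so $\theta<2s-1<0$), multiply by $a_j\asymp j^{2s-1}$, and interchange to get $\sum_\mu |x_\mu|^2 \mu^{-\theta}\sum_{j\le\mu} j^{\theta}\asymp \sum_\mu |x_\mu|^2 \mu^{-\theta}\cdot\mu^{1+\theta} = \sum_\mu |x_\mu|^2\mu$. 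That yields the clean bound $C_s\sum_\mu|x_\mu|^2\mu$ whenever the tail sum $\sum_{\mu\ge j}\mu^{-2s+\theta}$ is $O(j^{1-2s+\theta})$, i.e. for $s>\ldots$; one then tracks the exponent carefully: for $s<1/4$ the tail $\sum_{\mu\ge j}\mu^{-2s+\theta}$ over $\mu\le N$ is instead dominated near $\mu=N$, and optimizing $\theta$ (or simply taking $\theta=0$, so that the $\mu$‑sum is $\sum|x_\mu|^2$ and the $j$‑sum contributes $\sum_{\mu\ge j}\mu^{-2s}\asymp N^{1-2s}$, too lossy) shows one should instead push all the $j$‑weight onto a single $\mu$: bound $\big|\sum_{\mu\ge j}x_\mu\mu^{-s}\big|^2\le (\sum_{\mu\ge j}\mu^{-3/2})(\sum_{\mu\ge j}|x_\mu|^2\mu^{3/2-2s})$, use $\sum_{\mu\ge j}\mu^{-3/2}\asymp j^{-1/2}$, multiply by $a_j\asymp j^{2s-1}$, sum $\sum_{j\le\mu} j^{2s-3/2}\asymp \mu^{2s-1/2}$ for $s>1/4$ but $\asymp\mu^{2s-1/2}$ is bounded — wait, for $s<1/4$ the exponent $2s-3/2<-1/2<-1$... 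I realize the bookkeeping splits into the three cases exactly at $s=1/4$ because that is where $\sum_{j\le\mu}j^{2s-3/2}$ switches between convergent ($\to$ constant, giving $\sum|x_\mu|^2\mu^{3/2-2s}$... no). Let me restate cleanly below.

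**Cleaner statement of the plan.** Use Cauchy–Schwarz in the form $\big|\sum_{\mu\ge j}\frac{x_\mu}{\mu^s}\big|^2 \le \big(\sum_{\mu\ge j}\frac{1}{\mu^{2s+\alpha}}\big)\big(\sum_{\mu\ge j}|x_\mu|^2\mu^{\alpha}\big)$ for a parameter $\alpha>-1$ chosen later. Estimate the first factor by an integral comparison; it behaves like $j^{1-2s-\alpha}$ when $2s+\alpha>1$ and like a constant (or $\log$) when $2s+\alpha<1$ (resp. $=1$). Insert this, swap the $j$ and $\mu$ summations to get $\sum_\mu |x_\mu|^2\mu^{\alpha}\sum_{j\le\mu} a_j\cdot(\text{first factor at }j)$, and evaluate the inner $j$‑sum by integral comparison again using $a_j\asymp j^{2s-1}$. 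Choosing $\alpha=0$ handles $s>1/4$ (the inner $j$‑sum $\sum_{j\le\mu}j^{2s-1}\cdot\text{const}\asymp\mu^{2s}$ against first factor... ) — the point is that the three regimes $s<1/4$, $s=1/4$, $s>1/4$ correspond to whether the resulting exponent of $\mu$ in the inner sum is negative, zero, or positive, which is exactly the trichotomy in the statement. The one genuine obstacle is to make the Cauchy–Schwarz splitting lossless enough to land on the sharp weight $\mu^{3/2-2s}$ for $s<1/4$ rather than something weaker; the right choice turns out to be $\alpha = 3/2-2s$ so that the second factor is already the target $\sum_\mu|x_\mu|^2\mu^{3/2-2s}$, after which the first factor $\sum_{\mu\ge j}\mu^{-3/2}\asymp j^{-1/2}$ and the $j$‑integral $\int^{\mu}t^{2s-1-1/2}\,dt$ converges at the top (for $s<1/4$) to a constant multiple of... its value at $\mu$, namely $\mu^{2s-1/2}$ — so one needs the $\mu^{\alpha}$ and this $\mu^{2s-1/2}$ to recombine correctly, and checking that arithmetic ($\alpha + (2s-1/2) - \alpha = 2s-1/2$? no, the weight on $|x_\mu|^2$ is $\alpha = 3/2-2s$ as desired once one is careful not to double‑count) is the heart of the matter. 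I expect the $s=1/4$ case to follow by the same computation with the borderline integral $\int^\mu dt/t$ producing the $\log\mu$.
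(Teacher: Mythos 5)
Your final ``cleaner'' plan is, in substance, exactly the paper's proof: the paper sets $y_\mu=x_\mu\mu^{1-s}$ and splits $\mu=\mu^{3/4}\cdot\mu^{1/4}$ inside Cauchy--Schwarz, which is precisely your choice $\alpha=3/2-2s$, i.e. $\big|\sum_{\mu\ge j}x_\mu\mu^{-s}\big|^2\le\big(\sum_{\mu\ge j}\mu^{-3/2}\big)\big(\sum_{\mu\ge j}|x_\mu|^2\mu^{3/2-2s}\big)$; it then uses $\sum_{\mu\ge j}\mu^{-3/2}\le Cj^{-1/2}$ and $j^{2s}-(j-1)^{2s}\le C_s\,j^{2s-1}$, interchanges the $j$ and $\mu$ sums, and everything reduces to estimating $\sum_{j\le\mu}j^{2s-3/2}$. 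The only place your write-up is deficient is exactly this last step, which you leave unchecked (``the heart of the matter'') and in fact misstate for $0<s<1/4$: there $2s-3/2<-1$, so $\sum_{j\le\mu}j^{2s-3/2}=O_s(1)$ --- it does \emph{not} behave like $\mu^{2s-1/2}$ --- and this boundedness is precisely what leaves the weight on $|x_\mu|^2$ at $\mu^{3/2-2s}$; for $s>1/4$ the inner sum is $\asymp_s\mu^{2s-1/2}$ and $\mu^{3/2-2s}\cdot\mu^{2s-1/2}=\mu$; for $s=1/4$ it is $\asymp\log\mu$ and $3/2-2s=1$, giving $\mu\log\mu$. Supplying these two lines of bookkeeping completes your argument, and it then coincides with the paper's; the Schur-test detour and the earlier trial parameter choices in your draft contribute nothing and should be deleted.
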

  \begin{proof} Let $ y_\m= x_\m/\mu ^{s-1}$. By     H\"older's  inequality,
 \begin{eqnarray*}\sum_{j=1}^N (j^{2s}-(j-1)^{2s})\Big|\sum_{\m=j}^N  \frac{x_\mu}{\mu ^s} \Big|^2&=&\sum_{j=1}^N
(j^{2s}-(j-1)^{2s})\, \Big|\sum_{\m=j}^N 
\frac{y_\mu}{\mu  } \Big|^2 
\cr (\hbox{writing $\m=\mu ^{3/4}.\mu^{1/4}$})\quad &\le & \sum_{j=1}^N (j^{2s}-(j-1)^{2s}) \Big(\sum_{\m=j}^N  \frac{1}{ \m^{3/2} }  \Big)
\Big(\sum_{\m=j}^N 
\frac{|y_\m|^2}{ \m^{1/2}}  \Big)
\cr &\le & C_s\sum_{j=1}^N  \frac{(j^{2s}-(j-1)^{2s})}{  j^{1/2} }   
\Big(\sum_{\m=j}^N 
\frac{|y_\m|^2}{ \m^{1/2}}  \Big)
\cr &\le & C_s\sum_{\m=1}^N \frac{|y_\m|^2}{ \m^{1/2}}\sum_{j\le \m}   j^{2s-3/2}.
  \end{eqnarray*}   
If $0<s<1/4$, it follows that
  \begin{eqnarray*}\sum_{j=1}^N (j^{2s}-(j-1)^{2s})\Big|\sum_{\m=j}^N  \frac{x_\mu}{\mu ^s} \Big|^2  &\le & C_s\sum_{\m=1}^N 
\frac{|y_\m|^2}{ \m^{1/2}} =C_s\sum_{\m=1}^N 
 {|x_\m|^2} \m^{  3/2-2s}  .   \end{eqnarray*}    
If   $s> 1/4$,
 \begin{eqnarray*}\sum_{j=1}^N (j^{2s}-(j-1)^{2s})\Big|\sum_{\m=j}^N  \frac{x_\mu}{\mu ^s} \Big|^2 &\le & C_s\sum_{\m=1}^N 
\frac{|y_\m|^2}{ \m^{1/2}}\sum_{j\le \m}   j^{2s-3/2}\le C_s\sum_{\m=1}^N 
\frac{|y_\m|^2}{ \m^{1/2}}    \m^{2s-1/2}  
\cr &= &    C_s\sum_{\m=1}^N 
{|x_\m|^2} \m^{  3/2-2s}    \m^{2s-1/2}=  C_s\sum_{\m=1}^N 
 |x_\m|^2  \m .   \end{eqnarray*} 
And if $s=1/4$, 
\begin{eqnarray*}\sum_{j=1}^N (j^{2s}-(j-1)^{2s})\Big|\sum_{\m=j}^N  \frac{x_\mu}{\mu ^s} \Big|^2 &\le & C\sum_{\m=1}^N 
\frac{|y_\m|^2}{ \m^{1/2}}\sum_{j\le \m}   j^{-1}\le C\sum_{\m=1}^N 
\frac{|y_\m|^2\log \m}{ \m^{1/2}}    \
\cr &= &    C\sum_{\m=1}^N 
{|x_\m|^2} \m    \log \m .   \end{eqnarray*} \end{proof}
 
Indicate now how to deduce (\ref{Wilf1}). By taking $s=1/2$, $x_j=z_j/j^{1/2}$  we get in particular
\begin{eqnarray*}
  \sum_{k=1}^N  \Big|\sum_{j=k}^N  \frac{z_j}{j}  \Big|^2& \le &  C\sum_{j=1}^N       |z_j|^2 ,  
\end{eqnarray*}
hence by Lemma \ref{w},
\begin{eqnarray*} \int_\R  
\Big|\sum_{n=1}^N\frac{z_n}{n^{\frac{1}{2}(1+it)}} \Big|^2\frac{\dd t}{\pi( t^2+1)}&=&  \sum_{k=1}^N  \Big|\sum_{j=k}^N  \frac{z_j}{j}  \Big|^2  \ \le \  C\sum_{j=1}^N       |z_j|^2.
\end{eqnarray*}
Making the variable change $t=2\theta $, gives
\begin{eqnarray*}\int_\R  
\Big|\sum_{n=1}^N\frac{z_n}{n^{\frac{1}{2} +i\theta}} \Big|^2\frac{ \dd \theta}{ \pi(  \theta^2+\frac{1}{4})}&\le & 
 2C\sum_{j=1}^N       |z_j|^2,
 \end{eqnarray*} 
which is  (\ref{Wilf1}) up to the value of the constant.

\subsection{Example.}
One can deduce similar estimates for integrals of power four.
\begin{eqnarray*}
C_1(\log N)^3\le \int_\R  \Big|\sum_{n=1}^N\frac{1}{n^{1+i{t}/{2}}}\Big|^4\frac{\dd t}{\pi( t^2+1)}  \le   C_2(\log N)^3. \end{eqnarray*}
 Take  $s=1/2$, $x_n=1/n$.  Then
\begin{eqnarray*}
 \int_\R  
\Big|\sum_{n=1}^N\frac{1}{n^{1+it/2 }}\Big|^4\frac{\dd t}{\pi( t^2+1)}  &= & 
  \sum_{j=1}^{N^2}  \Big|\sum_{1\le n,\nu\le N\atop n\nu\ge j}   \frac{ 1}{(n\nu)^{ \frac{3}{2}  } } \Big|^2
 . 
\end{eqnarray*}
Next 
\begin{eqnarray*}
   \sum_{1\le n,\nu\le N\atop n\nu\ge j}   \frac{ 1}{(n\nu)^{ \frac{3}{2}  } } &=&  \sum_{j< n \le N } \frac{ 1}{ n ^{ \frac{3}{2}  } } \sum_{1\le  \nu\le N }   \frac{ 1}{ \nu ^{ \frac{3}{2}  } }+  \sum_{1\le n\le j }\frac{ 1}{ n ^{ \frac{3}{2}  } } \sum_{1\le  \nu\le N\atop  \nu\ge j/n}  \frac{ 1}{ \nu ^{ \frac{3}{2}  } }\cr
   &\le  & \frac{C}{ j ^{ \frac{1}{2}  } }      +  C   \sum_{1\le  n\le j} \frac{ 1}{ n ^{ \frac{3}{2}  } }\Big( \frac{  n}{  j   }\Big)^{ \frac{1}{2}  }=\frac{C}{ j ^{ \frac{1}{2}  } }      +  C  \sum_{1\le  n\le j}  \frac{ 1}{ n j^{ \frac{1}{2}  } }\le C  \frac{ \log j}{   j^{ \frac{1}{2}  } } . 
\end{eqnarray*}
Thus 
\begin{eqnarray*}
 \int_\R  
\Big|\sum_{n=1}^N\frac{1}{n^{1+\frac{1}{2}it }}\Big|^4\frac{\dd t}{\pi( t^2+1)}  &\le  &  C
  \sum_{j=1}^{N^2}   \frac{ \log^2 j}{   j  }\le C(\log N)^3
 . 
\end{eqnarray*}
Further, for $j\le N/2$,
\begin{eqnarray*}
   \sum_{1\le n,\nu\le N\atop n\nu\ge j}   \frac{ 1}{(n\nu)^{ \frac{3}{2}  } } &\ge &    \sum_{1\le n\le j }\frac{ 1}{ n ^{ \frac{3}{2}  } } \sum_{1\le  \nu\le N\atop  \nu\ge j/n}  \frac{ 1}{ \nu ^{ \frac{3}{2}  } }\ge  C   \sum_{1\le  n\le j} \frac{ 1}{ n ^{ \frac{3}{2}  } }\Big( \frac{  n}{  j   }\Big)^{ \frac{1}{2}  }=   C\frac{ \log j}{   j^{ \frac{1}{2}  } } , 
\end{eqnarray*}
and \begin{eqnarray*}
 \int_\R  
\Big|\sum_{n=1}^N\frac{1}{n^{1+\frac{1}{2}it }}\Big|^4\frac{\dd t}{\pi( t^2+1)}  &\ge  &  C
  (\log N)^3
 . 
\end{eqnarray*}
\subsection{Lubinsky's space $\boldsymbol{\mathcal L}$}It is natural to consider the (Hilbert) space $\mathcal L$ consisting with all Borel-measurable functions $f:\R\to \C$ such that 
$$ \|f\| ^2 = \int_\R |f(t)|^2\frac{ \dd t}{ \pi( t^2+1)}<\infty. $$
That question was  recently investigated by Lubinsky in \cite{L1}. Let $\l_0=0$ and $1=\l_1<\l_2<\ldots $ with $\lim_{k\to \infty} \l_k=\infty$. Applying the Gram-Schmidt process to $\{\l_n^{-it}, n\ge 1\}$, produces the sequence of orthonormal Dirichlet polynomials
$$\phi_n(t) = \frac{\l_n^{1-it}-\l_n^{1-it}}{\sqrt {\l_n^2-\l_{n-1}^2}}, \qq\qq n=1,2,\ldots $$

 Let $F(t)=\sum_{n=1}^\infty a_n \l_n^{-it}$ where $\{ a_n, n\ge 1\}\subset \C$ and let $s>0$. Recall  Th. 1.1 in \cite{L1}. Assume that the series 
$$ \sum_{n=1}^\infty (\l_k^{2s}-\l_{n-1}^{2s})\Big|\sum_{n=k}^\infty \frac{a_n}{\l_n^s}\Big|^2$$
converges. Then $F(s.)\in \mathcal L$ and
\begin{equation}\label{l}\int_\R |F(st)|^2 \frac{\dd t}{\pi (1+t^2)}=\sum_{n=1}^\infty (\l_k^{2s}-\l_{n-1}^{2s})\Big|\sum_{n=k}^\infty \frac{a_n}{\l_n^s}\Big|^2.
\end{equation}
Further,  $F(s.)$ is the limit  in $\mathcal L$  of some (explicited) subsequence of its partial sums. 
\vskip 2pt Consequently, in Lemma \ref{w}, we also have that
 \begin{eqnarray}\label{zd} \int_\R\Big|\sum_{n=1}^\infty x_n n^{ i s t}\Big|^2
 \frac{\dd t}{\pi( t^2+1)}&=& 
  \sum_{j=1}^\infty  (j^{2s}-(j-1)^{2s})\Big|\sum_{\m=j}^\infty  \frac{x_\mu}{\mu ^s} \Big|^2 \cr &=& \E\Big|\sum_{n=1}^\infty x_n\frac{W(n^{2s})}{
n^s}\Big|^2\
\end{eqnarray}  
provided that the Brownian  series 
$\sum_{n=1}^\infty x_n\frac{W(n^{2s})}{
n^s}$
converges in $L^2(\P)$.  
\vskip 2pt
 New sufficient conditions for $F$ to belong to  $\mathcal L$ can further  easily be derived from  Lemma \ref{x}. More precisely, 
 \begin{corollary}Let $F(t)=\sum_{n=1}^\infty{x_n}{n^{-i t }}$ where $x_n\ge 0$ and let $s>0$. A sufficient condition for $F(s.)\in \mathcal L$ is \begin{eqnarray*} \begin{cases}  \sum_{\m=1}^\infty 
 {x_\m^2}\, \m^{  3/2-2s}<\infty &\quad {\rm if}\  0<s<1/4,\cr \sum_{\m=1}^\infty
 x_\m^2 \, \m \log \m<\infty &\quad {\rm if}\  s= 1/4,\cr 
\sum_{\m=1}^\infty 
 x_\m^2\,  \m<\infty  &\quad {\rm if}\  s> 1/4    . 
\end{cases}\end{eqnarray*} 
\end{corollary}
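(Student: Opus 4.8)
The plan is to deduce the Corollary directly from Lemma~\ref{x} together with the Lubinsky-type identity \eqref{zd}, by a routine truncation-and-limit argument. Since $x_n\ge 0$, all the quantities involved are monotone in $N$, which is what makes the passage to infinite sums painless.

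\medskip

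First I would fix $s>0$ and apply Lemma~\ref{x} to the finite truncations. For each $N$, with $x_1,\ldots,x_N$ the first $N$ coefficients, Lemma~\ref{x} gives
\begin{eqnarray*}
\sum_{j=1}^N (j^{2s}-(j-1)^{2s})\Big|\sum_{\m=j}^N \frac{x_\m}{\m^s}\Big|^2 \le \Phi_s(N),
\end{eqnarray*}
where $\Phi_s(N)$ denotes the right-hand side of Lemma~\ref{x} restricted to $\m\le N$, i.e.\ $C_s\sum_{\m\le N}x_\m^2\m^{3/2-2s}$ for $0<s<1/4$, $C\sum_{\m\le N}x_\m^2\m\log\m$ for $s=1/4$, and $C_s\sum_{\m\le N}x_\m^2\m$ for $s>1/4$. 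By hypothesis $\Phi_s(N)$ is bounded above by a finite constant $\Phi_s(\infty)$, uniformly in $N$. Since $x_\m\ge0$, each inner sum $\sum_{\m=j}^N x_\m/\m^s$ is nondecreasing in $N$, hence so is the whole left-hand side; by monotone convergence it increases to $\sum_{j=1}^\infty (j^{2s}-(j-1)^{2s})\big|\sum_{\m=j}^\infty x_\m/\m^s\big|^2$, which is therefore $\le\Phi_s(\infty)<\infty$.

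\medskip

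Next I would invoke \eqref{zd}: the convergence just established is exactly the hypothesis needed for the Lubinsky identity (equivalently, it shows that the Brownian series $\sum_n x_n W(n^{2s})/n^s$ converges in $L^2(\P)$, since its partial sums form an $L^2$-Cauchy sequence with $\E\big|\sum_{n=M}^N x_n W(n^{2s})/n^s\big|^2$ controlled by the tail of the convergent series via the orthogonality of the Brownian increments). Therefore
\begin{eqnarray*}
\int_\R \Big|\sum_{n=1}^\infty x_n n^{-ist}\Big|^2\frac{\dd t}{\pi(t^2+1)}
= \sum_{j=1}^\infty (j^{2s}-(j-1)^{2s})\Big|\sum_{\m=j}^\infty \frac{x_\m}{\m^s}\Big|^2 <\infty,
\end{eqnarray*}
i.e.\ $F(s\,\cdot)\in\mathcal L$, which is the assertion. (One replaces $t$ by $-t$, harmless under the symmetric weight, to match $F(t)=\sum x_n n^{-it}$ with the $n^{ist}$ convention of Lemma~\ref{w}.)

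\medskip

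The only point requiring any care — and the step I expect to be the mildest obstacle — is the justification that the finite-$N$ estimate of Lemma~\ref{x} really passes to the infinite sum and then plugs into \eqref{zd}; this is where nonnegativity of the $x_n$ is used twice, once for the monotone convergence of the kernel-side series and once to guarantee the $L^2$-convergence of the Dirichlet/Brownian series (so that the left-hand integral is finite and equals the series). Everything else is a direct citation of Lemma~\ref{x} and \eqref{zd}, with no new computation.
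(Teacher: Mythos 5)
Your argument is correct and is essentially the paper's own proof: truncate, apply Lemma \ref{x}, use nonnegativity of the $x_\m$ to pass monotonically to the infinite series, and then cite Lubinsky's theorem (the result quoted before \eqref{zd}) to conclude $F(s\,\cdot)\in\mathcal L$. The parenthetical detour through $L^2$-convergence of the Brownian series is not needed for this citation (Lubinsky's hypothesis is exactly the convergence of the series you established), so nothing further is required.
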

\begin{proof}  Under either of these conditions, the corresponding series 
$$\sum_{j=1}^\infty  (j^{2s}-(j-1)^{2s})\Big(\sum_{\m=j}^\infty  \frac{x_\mu}{\mu ^s} \Big)^2$$
is convergent, since for instance if $s>1/4$,  by Lemma \ref{x}, for all $N_0\ge 1$, for all $N\ge N_0$, 
 \begin{eqnarray*} \sum_{j=1}^{N_0} (j^{2s}-(j-1)^{2s})\Big(\sum_{\m=j}^N  \frac{x_\mu}{\mu ^s} \Big)^2&\le &  
C_s\sum_{\m=1}^N 
 x_\m^2  \,\m .\end{eqnarray*} 
The conclusion  thus follows from the afore mentionned Lubinsky's result.\end{proof}

 \subsection{Higher moments}
Let   $s\ge 0$, $r>0$.  Consider    the  
more general   integrals 
$$ \int_\R \Big|\sum_{n=1}^N\frac{x_n}{n^{is t }}\Big|^{r} \frac{\dd t}{\pi( t^2+1)},$$
and in particular,  for any positive integer $k$,
$$ I_k(N,\s,s)=\int_\R \Big|\sum_{n=1}^N\frac{1}{n^{\s+is t }}\Big|^{2k} \frac{\dd t}{\pi( t^2+1)},$$
corresponding to Dirichlet approximating polynomials. By simple    iteration, Lemmas \ref{debut} and \ref{w} extend to general integer moments.\begin{lemma}\label{debut1} For any positive integer
$q$, we have \begin{eqnarray*}  \int_\R  
 \Big|\sum_{n=1}^Nx_n n^{- i s t}\Big|^{2q}\frac{\dd t }{\pi( t^2+1)}=  \sum_{1\le \m_{1}, \ldots, \m_{q}\le N \atop 1\le  \nu_{1}, \ldots ,  \nu_{q} \le N} 
 \overline{x _{\m_{1}}\ldots  x} _{\m_{q}}\,  x _{\nu_{1}}\ldots  x _{\nu_{q}}   \Big(\frac{\nu_{1}  \ldots \nu_{q} 
 \wedge \m_{1}  \ldots \m_{q}}{\nu_{1}  \ldots \nu_{q}  \vee \m_{1}  \ldots \m_{q} }\Big)^s.
\end{eqnarray*}
 And
\begin{eqnarray*}  \int_\R  
 \Big|\sum_{n=1}^Nx_n n^{- i s t}\Big|^{2q}\frac{\dd t }{\pi( t^2+1)}&=& \E \Big|\sum_{1\le  \nu_{i} \le N\atop 1\le i\le q } x _{\nu_{1}}\ldots  x
_{\nu_{q}}\frac{W(( {\nu_{1}}\ldots   {\nu_{q}})^{2s}) }{ ( {\nu_{1}}\ldots   {\nu_{q}})^{ s}  }\Big|^2
\cr &=& \sum_{j=1}^{N^q} (j^{2s}-(j-1)^{2s})\Big|\sum_{{1\le  \nu_{i} \le N\atop 1\le i\le q }\atop \nu_{1}\ldots\nu_{q}\ge j} 
\frac{ x _{\nu_{1}}\ldots  x _{\nu_{q}}}{(\nu_{1}\ldots\nu_{q}) ^s } \Big|^2.
\end{eqnarray*} 
  \end{lemma}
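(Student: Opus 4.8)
The plan is to repeat the proofs of Lemmas~\ref{debut} and~\ref{w} with $q$-fold products replacing single indices. Since every summation index is confined to the finite set $\{1,\dots,N\}$, all interchanges of finite sums with the integral, or with the expectation, are legitimate and no convergence question arises; consequently the statement is essentially a bookkeeping exercise.

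For the first identity I would expand the power directly. With $A(t)=\sum_{n=1}^N x_n n^{-ist}$ one has $|A(t)|^{2q}=A(t)^q\,\overline{A(t)}^{\,q}$ and
\begin{equation*}
A(t)^q=\sum_{1\le\nu_1,\dots,\nu_q\le N}x_{\nu_1}\cdots x_{\nu_q}\,(\nu_1\cdots\nu_q)^{-ist},
\end{equation*}
the conjugate factor being the analogous expansion with conjugated coefficients and $-ist$ replaced by $+ist$. Thus $|A(t)|^{2q}$ is a finite sum of monomials $\overline{x_{\mu_1}\cdots x_{\mu_q}}\,x_{\nu_1}\cdots x_{\nu_q}\,(\nu_1\cdots\nu_q)^{-ist}(\mu_1\cdots\mu_q)^{ist}$. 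Integrating term by term against $\dd t/\pi(t^2+1)$ and invoking $e^{-|\vartheta|}=\int_\R e^{i\vartheta t}\,\dd t/\pi(t^2+1)$ with $\vartheta=-s\log\big(\nu_1\cdots\nu_q/\mu_1\cdots\mu_q\big)$ — equivalently (\ref{cor1}) applied to the integers $\nu_1\cdots\nu_q$ and $\mu_1\cdots\mu_q$ — replaces each monomial, for $s\ge 0$, by $\big((\nu_1\cdots\nu_q\wedge\mu_1\cdots\mu_q)/(\nu_1\cdots\nu_q\vee\mu_1\cdots\mu_q)\big)^s$, which is the first asserted formula. Alternatively one argues by induction on $q$: write $A(t)^{q-1}=\sum_M y_M M^{-ist}$ with $y_M=\sum_{\nu_1\cdots\nu_{q-1}=M}x_{\nu_1}\cdots x_{\nu_{q-1}}$ and apply Lemma~\ref{debut} to the pair $(\{y_M\},\{x_n\})$; this is the ``simple iteration'' alluded to.

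For the probabilistic part I would plug the covariance identities (\ref{kb}) into the formula just obtained: for positive integers $P,Q$ and $s\ge 0$, $\E\big(W(P^{2s})W(Q^{2s})\big)=(P\wedge Q)^{2s}$, hence $\E\big(P^{-s}W(P^{2s})\cdot Q^{-s}W(Q^{2s})\big)=(P\wedge Q/P\vee Q)^s$. Taking $P=\nu_1\cdots\nu_q$, $Q=\mu_1\cdots\mu_q$ and summing over all indices recognises the double sum as a second moment,
\begin{equation*}
\int_\R\Big|\sum_{n=1}^N x_n n^{-ist}\Big|^{2q}\frac{\dd t}{\pi(t^2+1)}=\E\Big|\sum_{1\le\nu_1,\dots,\nu_q\le N}x_{\nu_1}\cdots x_{\nu_q}\,\frac{W((\nu_1\cdots\nu_q)^{2s})}{(\nu_1\cdots\nu_q)^s}\Big|^2 .
\end{equation*}
Finally, exactly as in Lemma~\ref{w}, decompose $W(m^{2s})=\sum_{j=1}^m g_j$ with $g_j=W(j^{2s})-W((j-1)^{2s})$; these increments are independent, centred, and $\E g_j^2=j^{2s}-(j-1)^{2s}$. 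Substituting, interchanging the finite sums, and noting that the products $\nu_1\cdots\nu_q$ run over integers at most $N^q$, one gets $\sum_{j=1}^{N^q}g_j\sum_{\nu_1\cdots\nu_q\ge j}(\nu_1\cdots\nu_q)^{-s}x_{\nu_1}\cdots x_{\nu_q}$; expanding the square and using the orthogonality of the $g_j$ delivers the last displayed identity.

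There is no genuine obstacle here: the computation is a line-by-line repetition of those in Lemmas~\ref{debut} and~\ref{w}. The only step deserving a little care is the index bookkeeping in the passage from the Brownian second moment to the final sum — one must check that the outer index $j$ runs exactly up to $N^q=\max_{\nu_i\le N}(\nu_1\cdots\nu_q)$ and that the inner restriction is precisely $\nu_1\cdots\nu_q\ge j$, as forced by $W(m^{2s})=\sum_{j\le m}g_j$.
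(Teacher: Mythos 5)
Your argument is correct and follows exactly the route the paper intends: the paper omits the proof, stating only that Lemmas~\ref{debut} and~\ref{w} extend ``by simple iteration'', and your expansion of $|A(t)|^{2q}=A(t)^q\overline{A(t)}^q$ together with the Cauchy identity, the covariance \eqref{kb}, and the increment decomposition of $W$ is precisely that iteration carried out in detail. No gaps; the index bookkeeping ($j\le N^q$, inner restriction $\nu_1\cdots\nu_q\ge j$) is handled correctly.
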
 
We omit the proof.   
By \eqref{l} and the considerations made after, it also follows that  
\begin{corollary}Let $q$ be a positive integer, $s>0$ and let
$F^q(t)= \big(\sum_{n=1}^\infty x_n n^{- i  t}\big)^q$. 
$$ F^q(s.)\in \mathcal L  \qq \Big(\hbox{\it and thus}\qq  \int_\R  
 \Big|\sum_{n=1}^\infty x_n n^{- i s t}\Big|^{2q}\frac{\dd t }{\pi( t^2+1)}<\infty\Big)$$
if the Brownian sum
$$ \sum_{1\le  \nu_{i} \le N\atop 1\le i\le q } x _{\nu_{1}}\ldots  x
_{\nu_{q}}\frac{W(( {\nu_{1}}\ldots   {\nu_{q}})^{2s}) }{ ( {\nu_{1}}\ldots   {\nu_{q}})^{ s}  }$$
converges in $L^2(\P)$.
\end{corollary}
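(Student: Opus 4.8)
The plan is to recognise $F^q$ as an ordinary Dirichlet series and then quote \eqref{l} and \eqref{zd} (Lubinsky's Theorem~1.1), applied not to the $x_n$ themselves but to their $q$-fold multiplicative convolution. Set
\[ b_m=\sum_{\nu_1\cdots\nu_q=m}x_{\nu_1}\cdots x_{\nu_q}\qquad(m\ge1), \]
a \emph{finite} sum, since $m$ has only $\tau_q(m)<\infty$ ordered factorisations into $q$ positive integers; thus $F^q(t)=\sum_{m\ge1}b_m m^{-it}$ is a Dirichlet series with frequencies $\l_m=m$ (so $\l_0=0$ and $1=\l_1<\l_2<\cdots$). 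Regrouping, in the multi-index Brownian sum of the statement, all the terms carrying the same random variable $W(m^{2s})/m^s$ turns it into $\sum_{m\ge1}b_m W(m^{2s})/m^s$; this regrouping is harmless because each fibre $\{\nu_1\cdots\nu_q=m\}$ is finite. Once this is done, \eqref{zd} (i.e.\ \eqref{l} with $a_m=b_m$, $\l_m=m$) yields at once $F^q(s\cdot)\in\mathcal L$ together with
\[ \int_\R\Big|\sum_{n=1}^\infty x_n n^{-ist}\Big|^{2q}\frac{\dd t}{\pi(t^2+1)}=\sum_{j=1}^\infty(j^{2s}-(j-1)^{2s})\Big|\sum_{m\ge j}\frac{b_m}{m^s}\Big|^2=\E\Big|\sum_{m=1}^\infty b_m\frac{W(m^{2s})}{m^s}\Big|^2<\infty, \]
which is the assertion.

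It remains to produce the analytic hypothesis of \eqref{l}, namely $\sum_j(j^{2s}-(j-1)^{2s})\big|\sum_{m\ge j}b_m/m^s\big|^2<\infty$, out of the $L^2$-convergence of the Brownian sum. Following the proof of Lemma~\ref{w}, write $W(m^{2s})=\sum_{j=1}^m g_j$ with $g_j=W(j^{2s})-W((j-1)^{2s})$ orthogonal and $\E g_j^2=j^{2s}-(j-1)^{2s}>0$; the box partial sum $S_N=\sum_{\nu_i\le N}x_{\nu_1}\cdots x_{\nu_q}\,W((\nu_1\cdots\nu_q)^{2s})(\nu_1\cdots\nu_q)^{-s}$ then has the orthogonal expansion $S_N=\sum_j g_j c_{j,N}$ with $c_{j,N}=\sum_{\nu_i\le N,\ \nu_1\cdots\nu_q\ge j}x_{\nu_1}\cdots x_{\nu_q}(\nu_1\cdots\nu_q)^{-s}$, whence $\E|S_N|^2=\sum_j(j^{2s}-(j-1)^{2s})|c_{j,N}|^2$ by orthogonality of the $g_j$ (consistently with Lemma~\ref{debut1}). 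If $S_N\to S_\infty$ in $L^2(\P)$, taking inner products with each $g_j$ shows $c_{j,N}\to c_j$ for every $j$; and since $\sum_{j\le J}(j^{2s}-(j-1)^{2s})|c_{j,N}|^2\le\E|S_N|^2\le C$ for all $N$, letting first $N\to\infty$ and then $J\to\infty$ gives $\sum_j(j^{2s}-(j-1)^{2s})|c_j|^2<\infty$. Regrouping the fibre sums in $c_{j,N}$ by the value $m=\nu_1\cdots\nu_q$ identifies $c_j=\sum_{m\ge j}b_m/m^s$ (each such tail being now convergent), which is precisely the hypothesis of \eqref{l}, while $\E|S_\infty|^2=\sum_j(j^{2s}-(j-1)^{2s})|c_j|^2$ gives the Brownian form of the value.

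The single point requiring care — and the step I expect to be the real obstacle — is the last identification $c_j=\sum_{m\ge j}b_m/m^s$: one must pass between the ``box'' truncations $\{\nu_i\le N\}$ of the hypothesis (and of Lemma~\ref{debut1}) and the ``ordered-by-$m$'' truncations $\{m\le M\}$ demanded by \eqref{l}, i.e.\ check that the overhang terms with $N<\nu_1\cdots\nu_q\le N^q$, whose coefficients are merely partial fibre sums, do not disturb the $L^2$-limit. When $x_n\ge0$ this is immediate: $c_{j,N}$ increases to $\sum_{m\ge j}b_m/m^s$ and one concludes by monotone convergence. In general it is handled via the uniform bound $\sup_N\E|S_N|^2<\infty$ and the orthogonality of the increments $g_j$ — equivalently, the partial sums $(P_N(s\cdot))^q$, $P_N(t)=\sum_{n\le N}x_n n^{-it}$, form a Cauchy sequence in the Hilbert space $\mathcal L$ with $\|(P_N(s\cdot))^q-(P_{N'}(s\cdot))^q\|_{\mathcal L}=\|S_N-S_{N'}\|_{L^2(\P)}$, so that their $\mathcal L$-limit is $F^q(s\cdot)$.
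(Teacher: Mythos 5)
Your argument is correct and, in substance, it is the route the paper intends: the paper states this corollary without any proof, as an immediate consequence of Lubinsky's Theorem 1.1 (formula \eqref{l}) combined with the Brownian representation of Lemma~\ref{debut1} and the remark \eqref{zd}, and your write-up simply supplies the bookkeeping that citation hides — the convolution coefficients $b_m$, the expansion of the box sums $S_N$ over the orthogonal increments $g_j$, and the passage to the limit in $j$ and $N$. The one place where you genuinely go beyond the paper is the box-versus-ordered truncation issue, which the paper glosses over; but note that your resolution of it in the general complex case is not what you announce. The Gram-matrix isometry $m^{-ist}\leftrightarrow W(m^{2s})/m^s$ (which is exactly the content of \eqref{cor1} and \eqref{kb}) shows that $(P_N(s\cdot))^q$ is Cauchy in $\mathcal L$ whenever $S_N$ is Cauchy in $L^2(\P)$, and this proves the conclusion directly, making Lubinsky's theorem superfluous; it does \emph{not} establish the identification $c_j=\sum_{m\ge j}b_m/m^s$ with the tails summed in increasing order of $m$, i.e.\ it does not verify the hypothesis of \eqref{l} for general complex $x_n$ (your monotone-convergence argument does so only for $x_n\ge 0$). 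So present the isometry/Cauchy argument as an alternative to the Lubinsky route rather than as completing it. The remaining looseness — identifying the $\mathcal L$-limit of $(P_N(s\cdot))^q$ with the pointwise-defined $F^q(s\cdot)$ — is present to the same degree in the paper and in Lubinsky's statement (only a subsequence converges a.e.), so it is not a defect of your proposal relative to the source.
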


\subsection{Connection with mean-values of Dirichlet polynomials}    Of first importance in the previous formulas is the role played by the parameter $s$, and more precisely the behavior of the Cauchy means when $s\to \infty$.

Lubinsky has established a clarifying  link with mean-values of general Dirichlet polynomials. 
We state it under slightly weaker assumptions than in \cite{L1} p.~428. 

\begin{lemma} Let $g:\R\to \C$ and define formally for any $s\ge 0$, $\mathcal M(s)= \frac{1}{2s}\int_{-s}^s |g(t)| \dd t $. Then,
\begin{equation}\label{equiv0} 
 \int_{+0} |g(t)|\log \frac{1}{t}\, \dd t<\infty\quad \Longleftrightarrow\quad \int_{+0} \mathcal M(s)\, \dd s<\infty. 
\end{equation}
\begin{equation}\label{equiv} 
\bigg(\int_{\R}   \frac{|g(t)|}{1+t^2}\, \dd t <\infty\ \ {\it and} \ \ \int_{+0} |g(t)|\log \frac{1}{t}\, \dd t<\infty\bigg)\quad \Longleftrightarrow\quad \int_{\R}   \frac{\mathcal M(s)}{1+s^2}\, \dd s<\infty. 
\end{equation}
Under any of the previous properties, we further have
\begin{equation}\label{comp0}\int_\R |g(st)| \frac{\dd t}{\pi (1+t^2)}=4\int_0^\infty\mathcal M(su)\frac{u^2}{\pi (1+u^2)^2} \dd u.\end{equation}
And if moreover, $\mathcal M(s)$ is  locally bounded,
then
 \begin{equation}\label{compa}\lim_{s\to \infty}\int_\R |g(st)| \frac{\dd t}{\pi (1+t^2)}=\lim_{s\to \infty} \mathcal M(s),\end{equation}
 if the preceding limit exists and is finite.
   \end{lemma}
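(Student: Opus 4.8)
The plan is to connect $\mathcal M(s)$ and the Cauchy mean by recognizing the Cauchy density on $\R$ as an average of scaled box-densities. The key identity is that the Poisson/Cauchy kernel can be written as a superposition of uniform densities: concretely, writing the Cauchy mean $\int_\R |g(st)|\,\frac{\dd t}{\pi(1+t^2)}$ and splitting into $t>0$ and $t<0$ by symmetry of the weight (after which $|g(st)|+|g(-st)|$ appears, which I will simply carry along as the integrand), one substitutes and then uses the representation
\begin{equation*}
\frac{1}{\pi(1+t^2)} = \int_0^\infty \mathbf 1_{\{|t|\le u\}}\,\frac{2}{\pi}\cdot\frac{?}{(1+u^2)^2}\,\dd u,
\end{equation*}
where the profile is chosen so that $\int_0^\infty (\text{profile})\,\mathbf 1_{\{u\ge |t|\}}\,\dd u$ reproduces $\frac{1}{\pi(1+t^2)}$. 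Differentiating in $|t|$ shows the profile must be $-\frac{\dd}{\dd u}\frac{1}{\pi(1+u^2)} = \frac{2u}{\pi(1+u^2)^2}$ evaluated at $u=|t|$, so that $\frac1{\pi(1+t^2)} = \int_{|t|}^\infty \frac{2u}{\pi(1+u^2)^2}\dd u$. Then Fubini gives $\int_\R |g(st)|\frac{\dd t}{\pi(1+t^2)} = \int_0^\infty \frac{2u}{\pi(1+u^2)^2}\Big(\int_{-u}^u |g(st)|\,\dd t\Big)\dd u$, and the inner integral is $2u\,\mathcal M(su)$ after the change of variable $t\mapsto$ normalized box; collecting constants yields exactly \eqref{comp0}. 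The integrability hypotheses in \eqref{equiv0}, \eqref{equiv} are precisely what is needed to justify Fubini (the $\log(1/t)$ condition handles the behavior near $t=0$, where $\mathcal M$ may blow up, and the $1/(1+t^2)$ condition handles infinity), so I would first prove the two equivalences by the same Fubini computation applied to $\int_{+0}\mathcal M(s)\dd s$ and $\int_\R \mathcal M(s)/(1+s^2)\dd s$ run in reverse.

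For \eqref{equiv0}, I would compute $\int_0^\epsilon \mathcal M(s)\,\dd s = \int_0^\epsilon \frac{1}{2s}\int_{-s}^s|g(t)|\,\dd t\,\dd s$ and apply Tonelli (everything nonnegative) to get $\int_{|t|<\epsilon} |g(t)|\int_{|t|}^\epsilon \frac{\dd s}{2s}\,\dd t + (\text{a term from }|t|\ge\epsilon\text{ with bounded }\int\dd s/2s)$; the inner $s$-integral is $\frac12\log(\epsilon/|t|)$, which is comparable to $\log(1/|t|)$ near $0$, giving the stated equivalence. For \eqref{equiv}, the same Tonelli argument on $\int_\R \mathcal M(s)/(1+s^2)\dd s$ produces $\int_\R |g(t)|\,\big(\int_{|t|}^\infty \frac{\dd s}{2s(1+s^2)}\big)\dd t$; the weight $h(|t|):=\int_{|t|}^\infty \frac{\dd s}{2s(1+s^2)}$ behaves like $\frac12\log(1/|t|)$ as $t\to 0$ and like a constant times $1/t^2$ as $t\to\infty$ (indeed $\frac{1}{2s(1+s^2)}\sim \frac{1}{2s^3}$ so the tail integral is $\sim \frac{1}{4t^2}$), which is exactly the sum of the two conditions on the left, establishing the equivalence.

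For \eqref{comp0} itself, once Fubini is licensed by \eqref{equiv}, the computation above is essentially mechanical: after the substitution I should double-check the constant $4$ and the factor $u^2/(1+u^2)^2$. Writing $\int_{-u}^u|g(st)|\dd t$ and substituting $t = u\tau$, $\dd t = u\,\dd\tau$, this becomes $u\int_{-1}^1 |g(su\tau)|\dd\tau = 2u\cdot\mathcal M(su)$ by definition of $\mathcal M$ at scale $su$ (the $\frac{1}{2(su)}\int_{-su}^{su}$ in $\mathcal M(su)$ matching $\frac{1}{2}\int_{-1}^1$ after rescaling the inner variable). So the outer integral is $\int_0^\infty \frac{2u}{\pi(1+u^2)^2}\cdot 2u\,\mathcal M(su)\,\dd u = 4\int_0^\infty \mathcal M(su)\frac{u^2}{\pi(1+u^2)^2}\dd u$, as claimed. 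Finally \eqref{compa} follows from \eqref{comp0} by dominated convergence: if $\mathcal M$ is locally bounded and $\lim_{s\to\infty}\mathcal M(s) = L$ exists finitely, then $\mathcal M(su)\to L$ for each fixed $u>0$, $\mathcal M$ is bounded on $[1,\infty)$ while on $(0,1]$ local boundedness plus the integrable weight $\frac{u^2}{(1+u^2)^2}$ near $0$ controls the contribution, so $\int_0^\infty \mathcal M(su)\frac{u^2}{\pi(1+u^2)^2}\dd u \to L\int_0^\infty \frac{u^2}{\pi(1+u^2)^2}\dd u = L/4$, giving the limit $L$. The main obstacle is purely bookkeeping: pinning down the exact normalizing constants in \eqref{comp0} and verifying that the local-boundedness hypothesis, rather than the stronger hypotheses of \eqref{equiv}, genuinely suffices for the dominated-convergence step near $u=0$ — one needs the integrability of $u^2/(1+u^2)^2$ there (which holds, it is $O(u^2)$) together with a uniform-in-large-$s$ bound on $\mathcal M(su)$ for $u\in(0,1]$, which requires a short argument that $\sup_{s\ge 1,\,0<u\le1}\mathcal M(su)<\infty$ follows from local boundedness plus the existence of the limit at infinity.
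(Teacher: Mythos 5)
Your proof is correct and is in substance the same as the paper's: where the paper invokes integration by parts, you use the equivalent layer-cake identity $\frac{1}{\pi(1+t^2)}=\int_{|t|}^\infty \frac{2u}{\pi(1+u^2)^2}\,\dd u$ together with Tonelli/Fubini, which produces the same weights ($\tfrac12\log(1/|t|)$ near $0$, $\asymp t^{-2}$ at infinity) for \eqref{equiv0}--\eqref{equiv} and the same identity \eqref{comp0} with the correct constant. Your final step — deducing a global bound on $\mathcal M$ from local boundedness plus the finite limit at infinity and then applying dominated convergence against the integrable weight $u^2/(1+u^2)^2$ — is exactly the paper's proof of \eqref{compa}.
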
 
   
     \begin{proof} Assertion \eqref{equiv0} follows by integration by part.
Further,  if $\eta>0$, $$ \int_{\eta\le |t|<\infty}   \frac{|g(t)|}{1+t^2}\, \dd t <\infty \quad \Longleftrightarrow\quad \int_{\eta\le s <\infty}   \frac{\mathcal M(s)}{1+s^2}\, \dd s<\infty.$$
Hence \eqref{equiv}  follows. An integration by part gives \eqref{comp0}.   Since $ \mathcal M(s)\to \l$, say, and $|\l|<\infty$, there is a real $A>0$ and a real $Y>0$ such that we have $| \mathcal M(y)|\le A$ if $y\ge Y$.  By assumption, $\mathcal M(s)$ is  locally bounded, we also have $ \mathcal M(y)\le B$ if $0\le y\le Y$. Thus
$ \mathcal M(y)\le A\vee B$ on $\R_+$. 
Therefore
$$ \frac{ \mathcal M(su)u^2}{ (1+u^2)^2} \le \frac{(A\vee B)u^2}{ (1+u^2)^2}\in L^1(\R_+) .$$
And  \eqref{compa} follows from the  dominated convergence theorem.\end{proof}

 
  Letting $g=\big|\sum_{n=1}^\infty x_n n^{- i  t}\big|^{2q}$, where $q$ is a positive integer yields
\begin{equation*}\lim_{s\to \infty}\int_\R  
 \Big|\sum_{n=1}^\infty x_n n^{- i s t}\Big|^{2q}\frac{\dd t }{\pi( t^2+1)}=\lim_{s\to \infty} \frac{1}{2s}\int_{-s}^s \Big|\sum_{n=1}^\infty x_n n^{- i  t}\Big|^{2q} \dd t ,\end{equation*}
 provided that the second limit exists. 
 \vskip 3 pt 
 
Another link with standard mean-values of Dirichlet sums is provided with the next lemma.
  \begin{lemma}\label{comp}Let $q,S,T$ be positive reals. Then 
\begin{eqnarray*}
  \int_{S}^{\sqrt{S^2+T^2}} \!\!\bigg( \int_{\R} \Big|\sum_{n =1}^N\frac{x_n}{ n^{ ist}}\Big|^{2q}\frac{\dd t}{\pi( t^2+1)}\bigg) \dd s 
= 
\frac{1}{2\pi }\int_{\R} \Big|\sum_{n =1}^N\frac{x_n}{ n^{ i\theta }}\Big|^{2q} \log  \Big( 1+ \frac{T^2}{\theta^2+S^2}  
\Big)\dd \theta.
\end{eqnarray*}
 Moreover,\begin{eqnarray*}
 \frac{1}{S}\int_{0}^S \Big|\sum_{n =1}^N\frac{x_n}{ n^{ i\theta }}\Big|^{2q}    \dd \theta  &\le  &\Big(\frac{2\pi }{\log 2} \Big)
\sup_{S\le s\le 2S}\,   \int_{\R}
\Big|\sum_{n =1}^N\frac{x_n}{ n^{ ist}}\Big|^{2q}\frac{\dd t}{\pi( t^2+1)}   
  . \end{eqnarray*}
\end{lemma}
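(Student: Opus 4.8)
The first identity is a Fubini computation combined with the elementary integral $\int_S^{\sqrt{S^2+T^2}} (n/m)^s\,\dd s$ type formula. First I would expand the $2q$-th power and exchange the order of the $s$-integral and the $t$-integral (both finite by Lemma~\ref{debut1}, since the inner integral is a finite positive quantity for each $s$ and is continuous in $s$, hence integrable over the compact interval $[S,\sqrt{S^2+T^2}]$). After the exchange we are left with
$$\int_S^{\sqrt{S^2+T^2}} \Big(\frac{a\wedge b}{a\vee b}\Big)^s\,\dd s$$
for $a=\nu_1\cdots\nu_q$, $b=\m_1\cdots\m_q$; writing $\rho = (a\wedge b)/(a\vee b)=e^{-u}$ with $u=\log\frac{a\vee b}{a\wedge b}\ge 0$, this equals $\int_S^{\sqrt{S^2+T^2}} e^{-us}\,\dd s$. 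The point is to recognize that, on the other side, $\log(1+T^2/(\theta^2+S^2))$ has exactly the Fourier–Cauchy representation making these match. Concretely I would use the identity (a variant of the one behind \eqref{cor1})
$$\frac{1}{2\pi}\int_\R e^{iu\theta}\log\Big(1+\frac{T^2}{\theta^2+S^2}\Big)\dd\theta = \frac{e^{-S|u|}-e^{-\sqrt{S^2+T^2}\,|u|}}{|u|}=\int_S^{\sqrt{S^2+T^2}} e^{-s|u|}\,\dd s,$$
which one checks by differentiating in $T$ (or in $S$) and using $\int_\R e^{iu\theta}\frac{\dd\theta}{\theta^2+c^2}=\frac{\pi}{c}e^{-c|u|}$. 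Plugging $u=\log\frac{a\vee b}{a\wedge b}$ and summing over all indices against the coefficients $\overline{x_{\m_1}\cdots x_{\m_q}}\,x_{\nu_1}\cdots x_{\nu_q}$ reproduces, via Lemma~\ref{debut1} applied pointwise in $\theta$ to the right-hand side, the claimed equality. So the structure is: (i) exchange integrals; (ii) reduce to the scalar integral $\int_S^{\sqrt{S^2+T^2}}e^{-s|u|}\dd s$; (iii) recognize this as the $\theta$-Fourier transform of $\frac{1}{2\pi}\log(1+T^2/(\theta^2+S^2))$; (iv) resum.

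For the second (inequality) assertion, specialize the first identity to $S$ arbitrary and $T=\sqrt{3}\,S$, so that $\sqrt{S^2+T^2}=2S$ and $1+T^2/(\theta^2+S^2)=1+3S^2/(\theta^2+S^2)\ge 1+3S^2/(\theta^2+S^2)$. On the interval $|\theta|\le S$ one has $\theta^2+S^2\le 2S^2$, hence $1+T^2/(\theta^2+S^2)\ge 1+\tfrac{3}{2}=\tfrac{5}{2}>2$, so the logarithmic weight is $\ge \log 2$ there; discarding the (nonnegative) contribution of $|\theta|>S$ on the right gives
$$\int_S^{2S}\Big(\int_\R \Big|\sum \tfrac{x_n}{n^{ist}}\Big|^{2q}\tfrac{\dd t}{\pi(t^2+1)}\Big)\dd s \ \ge\ \frac{\log 2}{2\pi}\int_{-S}^{S}\Big|\sum \tfrac{x_n}{n^{i\theta}}\Big|^{2q}\dd\theta.$$
Bounding the left side by $S\cdot \sup_{S\le s\le 2S}(\cdots)$ and dividing by $S$ yields the stated inequality (the integral $\int_{-S}^S$ equals $\int_0^S$ after one halves and notes the integrand is even in $\theta$, up to replacing $x_n$ by its conjugate — in fact $|\sum x_n n^{-i\theta}|^{2q}$ is already even in $\theta$ only if the $x_n$ are real; in general one should write $\int_{-S}^S$, but the statement's $\frac1S\int_0^S$ is recovered since the full integrand is even, being a squared modulus). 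I would double-check that last parity point carefully.

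The main obstacle I anticipate is step (iii): producing and justifying the Fourier representation of $\log(1+T^2/(\theta^2+S^2))$ cleanly. The cleanest route is not to guess the transform but to differentiate the known identity $\int_\R e^{iu\theta}\frac{\dd\theta}{\theta^2+c^2}=\frac{\pi}{c}e^{-c|u|}$ with respect to the parameter $c$ and integrate $c$ from $S$ to $\sqrt{S^2+T^2}$, since $\frac{\dd}{\dd c}\log(\theta^2+c^2)=\frac{2c}{\theta^2+c^2}$ and therefore $\log(\theta^2+(S^2+T^2))-\log(\theta^2+S^2)=\int_S^{\sqrt{S^2+T^2}}\frac{2c\,\dd c}{\theta^2+c^2}$; taking Fourier transforms term by term (justified by Fubini, the integrand being absolutely integrable in $(c,\theta)$ on the relevant range) gives precisely $\int_S^{\sqrt{S^2+T^2}} 2 e^{-c|u|}\,\dd c\cdot\frac{1}{2}=\int_S^{\sqrt{S^2+T^2}}e^{-c|u|}\dd c$ after dividing the $2\pi$ and $2c$ factors appropriately. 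Once this scalar lemma is in hand, the rest is bookkeeping with Lemma~\ref{debut1}.
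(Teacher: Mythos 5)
Your computations are correct as far as they go, but the route you take -- expanding the $2q$-th power via Lemma \ref{debut1} and then identifying $\int_S^{\sqrt{S^2+T^2}}e^{-s|u|}\,\dd s$ as the Fourier transform of $\frac{1}{2\pi}\log\big(1+\frac{T^2}{\theta^2+S^2}\big)$ -- is both more roundabout than the paper's and, as written, does not prove the statement: the lemma is asserted for every positive \emph{real} $q$, while the expansion into a $2q$-fold Dirichlet sum is only available for integer $q$, so your argument silently restricts the range of $q$. The paper needs none of this structure: in the inner integral it substitutes $t=\theta/s$, turning it into $\int_\R \big|\sum_{n=1}^N x_n n^{-i\theta}\big|^{2q}\frac{s\,\dd\theta}{\pi(\theta^2+s^2)}$, then applies Tonelli (the integrand is nonnegative) and computes the elementary integral $\int_S^{\sqrt{S^2+T^2}}\frac{s\,\dd s}{\pi(\theta^2+s^2)}=\frac{1}{2\pi}\log\big(1+\frac{T^2}{\theta^2+S^2}\big)$. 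That argument uses nothing about the integrand beyond nonnegativity and measurability, hence covers all real $q>0$ at once; your scalar Fourier identity (which is correct, and correctly justified by differentiating in the parameter $c$) is in effect the same computation seen after an unnecessary expansion. If you keep your route, you must either restrict the lemma to integer $q$ or supply a separate argument for the general case.

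For the second inequality, your choice $T=\sqrt{3}\,S$ (so $\sqrt{S^2+T^2}=2S$, weight $\ge\log(5/2)\ge\log 2$ on $|\theta|\le S$) is fine and even slightly cleaner than the paper's $T=\sqrt{2}\,S$. But the parity discussion at the end is both incorrect and unnecessary: $\big|\sum_n x_n n^{-i\theta}\big|^{2q}$ is \emph{not} even in $\theta$ for general complex $x_n$ (reflection replaces $x_n$ by $\overline{x_n}$), and no evenness is needed -- since the integrand is nonnegative you simply discard the contributions of $\theta<0$ and of $\theta>S$, keeping $\int_0^S$ directly, exactly as the paper does; then bound $\int_S^{2S}(\cdots)\,\dd s\le S\,\sup_{S\le s\le 2S}(\cdots)$ and divide by $S$.
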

This provides a partial  converse to Lubinsky's observation. Indeed, assume that 
$$ \limsup_{s\to \infty } \int_{\R}
\Big|\sum_{n =1}^N\frac{x_n}{ n^{ ist}}\Big|^{2q}\frac{\dd t}{\pi( t^2+1)} = \l.$$
 Then it follows from the second part of the Lemma that
$$ \lim_{S\to \infty } \frac{1}{S}\int_{0}^S \Big|\sum_{n =1}^N\frac{x_n}{ n^{ i\theta }}\Big|^{2q}    \dd \theta \le \Big(\frac{2\pi }{\log 2} \Big) \l.$$
\begin{proof}
By  using the  variable change
$t=\theta/s$, we get
\begin{eqnarray*}
& &   \int_{S}^{\sqrt{S^2+T^2}} \bigg( \int_{\R} \Big|\sum_{n =1}^N\frac{x_n}{ n^{ ist}}\Big|^{2q}\frac{\dd t}{\pi( t^2+1)}\bigg) \dd s 
\cr &=
& 
\int_{S}^{\sqrt{S^2+T^2}} \bigg(   \int_{\R} \Big|\sum_{n =1}^N\frac{x_n}{ n^{ i\theta }}\Big|^{2q}\frac{s\dd \theta}{\pi( \theta^2+s^2)}\bigg) \dd s
  \cr &= & 
\int_{\R} \Big|\sum_{n =1}^N\frac{x_n}{ n^{ i\theta }}\Big|^{2q}  \bigg(  \int_{S}^{\sqrt{S^2+T^2}}       \frac{s \dd s}{\pi( \theta^2+s^2)} \bigg)\dd
\theta
\cr &= & 
\frac{1}{2\pi }\int_{\R} \Big|\sum_{n =1}^N\frac{x_n}{ n^{ i\theta }}\Big|^{2q} \log  \Big( 1+ \frac{T^2}{\theta^2+S^2}  \Big)\dd \theta.
\end{eqnarray*}
 
Letting $T=\sqrt{2}S$ gives
\begin{eqnarray*}
  \int_{S}^{2S} \bigg( \int_{\R} \Big|\sum_{n =1}^N\frac{x_n}{ n^{ ist}}\Big|^{2q}\frac{\dd t}{\pi( t^2+1)}\bigg) \dd s  &= & 
\frac{1}{2\pi }\int_{\R} \Big|\sum_{n =1}^N\frac{x_n}{ n^{ i\theta }}\Big|^{2q} \log  \Big( 1+ \frac{2S^2}{\theta^2+S^2}  \Big)\dd \theta
\cr &\ge  & 
\frac{\log 2}{2\pi }\int_{0}^S \Big|\sum_{n =1}^N\frac{x_n}{ n^{ i\theta }}\Big|^{2q}    \dd \theta. \end{eqnarray*}
Therefore
\begin{eqnarray*}
 \int_{0}^S \Big|\sum_{n =1}^N\frac{x_n}{ n^{ i\theta }}\Big|^{2q}    \dd \theta&\le & \frac {2\pi }{\log 2} \int_{S}^{2S} \bigg(
\int_{\R}
\Big|\sum_{n =1}^N\frac{x_n}{ n^{ ist}}\Big|^{2q}\frac{\dd t}{\pi( t^2+1)}\bigg) \dd s \cr &\le  & 
 \Big(\frac {2\pi S }{\log 2} \Big) \, \sup_{S\le s\le 2S} \
\int_{\R}
\Big|\sum_{n =1}^N\frac{x_n}{ n^{ ist}}\Big|^{2q}\frac{\dd t}{\pi( t^2+1)}   . \end{eqnarray*} 
\end{proof} 
 \vskip 3 pt
 Finally, a  simple re-summation argument also provides a direct connection with standard mean-values of Dirichlet polynomials. 
\begin{lemma} \label{comp}There exist two positive absolute constants $c,
C$ such that
$$c\, \sum_{j=1}^{\infty}\frac{\mathcal M_{j}}{j^2}\le  \int_\R \Big|\sum_{n=1}^N\frac{1}{n^{\s+is t }}\Big|^{r} \frac{\dd t}{\pi( t^2+1)} \le C\, 
\sum_{j=1}^{\infty}\frac{\mathcal M_{j}}{j^2}.$$ where we set 
$$ \mathcal M_{j}= \frac{1}{2j}\int_{-j}^{j} \big|\sum_{n=1}^N\frac{1}{n^{\s+is t }}\big|^{r}\dd t, \qq j=1,2,\ldots$$
\end{lemma}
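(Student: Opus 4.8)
The plan is to expand the Cauchy-weight $\frac{1}{\pi(t^2+1)}$ as a weighted superposition of the scaled Cauchy weights $\frac{1}{\pi(t^2/j^2+\,\cdot\,)}$, or more efficiently to compare the single integral over $\R$ against the dyadic-type sum of the flat averages $\mathcal M_j$ directly. Concretely, write $\Phi(t)=\big|\sum_{n=1}^N n^{-\s-ist}\big|^r\ge 0$ (note that since $\Phi$ only depends on $|t|$ we may work on $\R_+$, doubling where convenient). The key identity is the elementary one
\begin{equation*}
\frac{1}{\pi(t^2+1)}=\sum_{j=1}^\infty c_j\,\frac{\mathbf 1_{\{|t|\le j\}}}{2j},\qquad\text{for suitable }c_j\ge 0,
\end{equation*}
or rather a two-sided comparison of this shape: one checks that there are absolute constants $c,C>0$ with
\begin{equation*}
c\sum_{j\ge 1}\frac{1}{j^2}\cdot\frac{\mathbf 1_{\{|t|\le j\}}}{2j}\ \le\ \frac{1}{\pi(t^2+1)}\ \le\ C\sum_{j\ge 1}\frac{1}{j^2}\cdot\frac{\mathbf 1_{\{|t|\le j\}}}{2j}
\end{equation*}
for all real $t$. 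Granting this, integrating against $\Phi\ge0$ and using Tonelli to exchange sum and integral gives immediately
\begin{equation*}
c\sum_{j\ge 1}\frac{1}{j^2}\Big(\frac{1}{2j}\int_{-j}^{j}\Phi(t)\,\dd t\Big)\le\int_\R\Phi(t)\,\frac{\dd t}{\pi(t^2+1)}\le C\sum_{j\ge 1}\frac{1}{j^2}\Big(\frac{1}{2j}\int_{-j}^{j}\Phi(t)\,\dd t\Big),
\end{equation*}
which is precisely the asserted bound with $\mathcal M_j=\frac{1}{2j}\int_{-j}^j\Phi$.

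The only real work is verifying the pointwise two-sided comparison of weights, and this is a one-variable calculus fact independent of $N,\s,r$. For the right-hand inequality: for $|t|\le 1$ the sum $\sum_{j\ge 1} j^{-3}\mathbf 1_{\{|t|\le j\}}=\sum_{j\ge1}j^{-3}=\zeta(3)$ is a positive constant, while $\frac{1}{\pi(t^2+1)}\le\frac1\pi$, so the inequality holds there with $C=1/(\pi\zeta(3))$ say; for $|t|>1$ one has $\sum_{j\ge\lceil|t|\rceil}j^{-3}\asymp |t|^{-2}\asymp\frac{1}{\pi(t^2+1)}$ by comparison with $\int_{|t|}^\infty u^{-3}\,\dd u=\tfrac12|t|^{-2}$, and one absorbs the finitely many terms $j<|t|$ trivially since those contribute $0$ to the tail sum $\sum_{j\ge|t|}$. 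The left-hand inequality is the same estimate read in the other direction. One may alternatively replace $j^{-2}$ by $j^{-2}$ throughout and note $\sum_{j\ge m}j^{-3}\asymp m^{-2}$ uniformly in $m\ge1$, which is the single fact doing all the work.

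I would write it in this order: (1) reduce to $t\ge0$ using evenness of $\Phi$; (2) state and prove the pointwise weight comparison $\sum_{j\ge1}j^{-2}\frac{\mathbf 1_{\{t\le j\}}}{2j}\asymp\frac{1}{\pi(t^2+1)}$ as a lemma-free inline computation, splitting $t\le1$ and $t>1$ and using $\sum_{j\ge m}j^{-3}\asymp m^{-2}$; (3) multiply by $\Phi(t)\ge0$, integrate, and apply Tonelli to get the displayed chain; (4) identify the inner integral as $\mathcal M_j$. The main (and really only) obstacle is getting clean explicit constants in step (2), and even that is optional since the statement only claims existence of absolute $c,C$; the nonnegativity of $\Phi$, which makes Tonelli and the termwise comparison legitimate, is the one structural hypothesis being used and should be flagged.
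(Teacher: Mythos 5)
Your proposal is correct. The pointwise comparison $\frac{1}{\pi(1+t^2)}\asymp\sum_{j\ge1}\frac{1}{2j^{3}}\mathbf 1_{\{|t|\le j\}}$ does hold with absolute constants (for $|t|\le1$ both sides are bounded positive constants; for $|t|>1$ use $\sum_{j\ge m}j^{-3}\asymp m^{-2}$ with $m=\lceil|t|\rceil\asymp|t|$), and since the integrand $\Phi(t)=\big|\sum_{n\le N}n^{-\s-ist}\big|^{r}\ge0$ is bounded, Tonelli gives exactly $\int_\R\Phi\,W=\sum_j \mathcal M_j/j^{2}$ and hence the two-sided bound in one stroke. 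This is a genuinely different packaging from the paper's argument: there the integral is first discretized into unit-interval pieces $u_k=\int_{k-1}^{k}\Phi$, the Cauchy weight is bounded on each piece by its values at the integer endpoints, and Abel summation converts $\sum_k u_k y_k$ (with $y_k=\frac{1}{\pi((k-1)^2+1)}$) into $\sum_j D_j(y_j-y_{j+1})$ with $D_j=\int_0^j\Phi$ and $y_j-y_{j+1}\asymp j^{-3}$, the lower bound and the negative half-line being treated ``similarly.'' The underlying quantitative fact is the same in both proofs (differences of the Cauchy weight, respectively its tail sums, are of order $j^{-3}$, i.e.\ $\sum_{j\ge m}j^{-3}\asymp m^{-2}$); your route amounts to the continuous, layer-cake form of that summation by parts. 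What it buys you is a cleaner write-up: no discretization of the Dirichlet integral, no boundary terms from Abel's formula, both inequalities and both half-lines handled simultaneously, and the only structural hypothesis used (nonnegativity of $\Phi$, justifying Tonelli and termwise comparison) is made explicit. What the paper's route makes more visible is the direct link between the Cauchy mean and the partial integrals $\int_0^j\Phi$, which is the quantity of interest in the surrounding discussion of standard mean values. Your step (2) constants need not be made explicit, as you note, since only existence of absolute $c,C$ is claimed.
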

 \begin{remark}It is well-known that for any   complex numbers  $ x_1, \ldots, x_N$  and  any $0<\a<\infty$, the limits  
$$  \lim_{T\to \infty}  \frac{1}{T} \int_{0}^T \big| \sum_{n=1}^N x_n n^{-it}\big|^\a \dd t  $$
exist. The series $\sum_{j=1}^{\infty}\frac{\mathcal M_{j}}{j^2}$ is thus convergent. For the values $\a=2k$, $k=1,2,\ldots$, we recall that 
\begin{equation*}\lim_{T\to\infty}{1\over 2T}\int_{-T}^T
\Big|  \sum_{n=1}^N  {1\over  n^{ \s+it }}\Big|^{2k}\dd t =\sum_{1\le m\le N^k} {d_{k,N }^2(m)\over m^{2\s}},\end{equation*} 
 where $d_{k,N }(m)$ denotes the number of representations of  $m$ as a product of
$k$ factors less or equal to $N$.
  \end{remark}
  
 \begin{proof}[Proof of Lemma \ref{comp}]
Let 
$$u_k=\int_{k-1}^{k} \big|\sum_{n=1}^N\frac{1}{n^{\s+is t }}\big|^{2k}\dd t, \qq \quad k=1,2,\ldots$$ and note that 
$$ \sum_{k=0}^\infty  \frac{u_k}{\pi( k^2+1)}\le \int_0^\infty \Big|\sum_{n=1}^N\frac{1}{n^{\s+is t }}\Big|^{2k} \frac{\dd t}{\pi( t^2+1)}\le \sum_{k=0}^\infty  \frac{u_k}{\pi( (k-1)^2+1)}, $$
   Let $D_j=\sum_{k=1}^j u_k$, $j\ge 1$.
By applying Abel summation 
 $$\sum_{k=1}^r u_ky_k= D_ry_{r+1}+ \sum_{j=1}^{r}D_j(y_j-y_{j+1}),$$  with $ y_k= \frac{1}{\pi( (k-1)^2+1)}$, we get  
$$ \int_0^r \Big|\sum_{n=1}^N\frac{1}{n^{\s+is t }}\Big|^{2k} \frac{\dd t}{\pi( t^2+1)}\le \frac{1}{\pi( r^2+1)} \int_{0}^{r} \big|\sum_{n=1}^N\frac{1}{n^{\s+is t }}\big|^{2k}\dd t$$
$$ + \sum_{j=1}^{r}\frac{2j-1}{\pi( (j-1)^2+1)(j^2+1)}\int_{0}^{j} \big|\sum_{n=1}^N\frac{1}{n^{\s+is t }}\big|^{2k}\dd t. $$
Hence
$$ \int_0^\infty \Big|\sum_{n=1}^N\frac{1}{n^{\s+is t }}\Big|^{2k} \frac{\dd t}{\pi( t^2+1)}\le C \sum_{j=1}^{\infty}\frac{1}{j^3}\int_{0}^{j} \big|\sum_{n=1}^N\frac{1}{n^{\s+is t }}\big|^{2k}\dd t. $$
 Operating similarly for the lower part and next for the integration over $\R_-$ provides the claimed estimate.\end{proof}

In the next subsection, we  investigate   the behavior of Cauchy
integrals when the parameter $s$ is small and the moments are high.
\subsection{Behavior of $\boldsymbol{I_{k}(N,\s,s)}$ for $\boldsymbol{ s {=} s(k)}$ small and $\boldsymbol{ k}$ large}
 We now consider the behavior of these
integrals when $s$ and
$k$ are simultaneously varying. More precisely, we will study the case when $s=1/\sqrt {c_{\s,N} k}$ where $c_{\s,N}\sim c$  as $k\to
\infty$ ($c=c(\s)$ will be an explicit positive constant). 
\vskip 2 pt
We obtain the following  very precise uniform estimate.
\begin{theorem} \label{sk}There exist two positive numerical constants
$c_0,C$ such that for all positive integers
$N$, $k$   and $0\le \s<1$,
\begin{eqnarray*} \Big|\int_\R \big|\sum_{n=1}^N\frac{1}{ n^{\s+i t/\sqrt{c_{\s,N}  k}   }  }\big|^{2k} \frac{\dd t}{\pi( t^2+1)}  - c_0\big(\sum_{n=1}^N\frac{1}{n^\s
}\big)^{2k}   \Big|  
 &\le  & C\ \frac{ (1-\s)\log N}{k^{1/2}}\Big(\sum_{n=1}^N\frac{1}{n^\s
}\Big)^{2k}
 \end{eqnarray*}
where $$c_{\s,N}=\frac{2}{(1-\s)^2} +  \mathcal O( N^{\s-1}(\log N)^2).$$\end{theorem}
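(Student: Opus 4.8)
The plan is to turn the integral into a probability via the multiplicative identity of Lemma~\ref{debut1}, and then to recognise the resulting quantity as a Gaussian-rescaled sum to which a quantitative central limit theorem applies.

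Set $Z=\sum_{n=1}^N n^{-\s}$ and let $\mu_1,\dots,\mu_k,\nu_1,\dots,\nu_k$ be independent random variables, each with law $\P(\mu_i=n)=n^{-\s}/Z$, $1\le n\le N$. Taking $x_n=n^{-\s}$ and $q=k$ in Lemma~\ref{debut1}, and using $\big(\min(P,Q)/\max(P,Q)\big)^s=e^{-s|\log P-\log Q|}$ with $P=\mu_1\cdots\mu_k$ and $Q=\nu_1\cdots\nu_k$, one obtains
\[
I_k(N,\s,s)=Z^{2k}\,\E e^{-s|S_k|},\qquad S_k:=\sum_{i=1}^k X_i,\quad X_i:=\log\mu_i-\log\nu_i .
\]
The $X_i$ are i.i.d., centred (their common law is symmetric), with variance $\s_S^2:=\mathrm{Var}(X_1)=2\,\mathrm{Var}(\log\mu_1)$; put $c_{\s,N}:=\s_S^2$. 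For the distinguished value $s=1/\sqrt{c_{\s,N}k}=1/(\s_S\sqrt k)$ one has $s|S_k|=|\widetilde S_k|$ with $\widetilde S_k:=S_k/(\s_S\sqrt k)$, the central limit normalisation, so that
\[
\frac{I_k\big(N,\s,1/\sqrt{c_{\s,N}k}\,\big)}{Z^{2k}}=\E e^{-|\widetilde S_k|}.
\]

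By the central limit theorem $\widetilde S_k$ converges in law to $G\sim\mathcal N(0,1)$, hence $\E e^{-|\widetilde S_k|}\to c_0:=\E e^{-|G|}=2e^{1/2}\big(1-\Phi(1)\big)$, $\Phi$ the standard normal distribution function. To quantify this, use the identity $e^{-|y|}=\int_0^\infty e^{-\l}\,\mathbf{1}_{\{|y|\le\l\}}\,\dd\l$, which gives for any random variable $U$ with distribution function $F_U$
\[
\big|\E e^{-|U|}-\E e^{-|G|}\big|\le\int_0^\infty e^{-\l}\,\big|\P(|U|\le\l)-\P(|G|\le\l)\big|\,\dd\l\le 2\sup_{y\in\R}\big|F_U(y)-\Phi(y)\big| .
\]
Apply this with $U=\widetilde S_k$ together with the Berry--Esseen inequality $\sup_y|F_{\widetilde S_k}(y)-\Phi(y)|\le C_{\mathrm{BE}}\,\rho_3/(\s_S^3\sqrt k)$, where $\rho_3:=\E|X_1|^3$. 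Since $|X_1|=|\log\mu_1-\log\nu_1|\le\log N$ we have $\rho_3\le(\log N)\,\s_S^2$, so that
\[
\Big|\frac{I_k}{Z^{2k}}-c_0\Big|\le\frac{2C_{\mathrm{BE}}\log N}{\s_S\sqrt k}.
\]

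It remains to compute $c_{\s,N}=\s_S^2=2\,\mathrm{Var}(\log\mu_1)$. Comparing the sums $\sum_{n\le N}n^{-\s}(\log n)^j$, $j=0,1,2$, with the integrals $\int_1^N x^{-\s}(\log x)^j\,\dd x$ by Euler--Maclaurin and simplifying the resulting expression for $\E(\log\mu_1)^2-(\E\log\mu_1)^2$, one finds
\[
c_{\s,N}=\frac{2}{(1-\s)^2}-\frac{2N^{1-\s}(\log N)^2}{(N^{1-\s}-1)^2}+(\text{lower order})=\frac{2}{(1-\s)^2}+\mathcal O\big(N^{\s-1}(\log N)^2\big),
\]
the last identity holding once $N^{1-\s}$ is bounded away from $1$, i.e. $(1-\s)\log N$ bounded below, which is the range in which the stated error term carries information. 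In that range $\s_S\ge c/(1-\s)$, so $(\log N)/\s_S\le C(1-\s)\log N$; inserting this into the previous display and multiplying through by $Z^{2k}=\big(\sum_{n=1}^N n^{-\s}\big)^{2k}$ yields the theorem. The Berry--Esseen step and the representation of $e^{-|y|}$ are routine; the main work is the \emph{uniform} computation of $c_{\s,N}$, and in particular the lower bound $\s_S\ge c/(1-\s)$. For the latter one may start from $\mathrm{Var}(\log\mu_1)=\frac{1}{2Z^2}\sum_{m,n\le N}(mn)^{-\s}(\log m-\log n)^2$ and restrict the double sum to two well-separated blocks of indices ($N/2\le m\le N$ and $n\le\sqrt N$), which produces the required lower bound uniformly in $\s\in[0,1)$.
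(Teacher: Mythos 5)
Your argument is, in substance, the paper's own proof. The identity $I_k(N,\sigma,s)=Z^{2k}\,\E e^{-s|S_k|}$ with $S_k$ a sum of i.i.d.\ symmetrized variables $\log\mu_i-\log\nu_i$ is exactly the random model of Section 2 (Lemmas \ref{rm} and \ref{rm1}, which reach the same formula through the characteristic function of $\widetilde S_k$ rather than directly from Lemma \ref{debut1} -- a cosmetic difference); the choice $s=1/\sqrt{c_{\sigma,N}k}$, the layer-cake comparison of $\E e^{-|\widetilde S_k|/s_k}$ with $\E e^{-|g|}$, the Berry--Esseen step with the bound $\E|X_1|^3\le(\log N)\,\E X_1^2$, and the Euler--Maclaurin evaluation of the variance (Lemma \ref{var}) all coincide with the paper's argument.

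The one step that does not hold as written is your closing device for the lower bound $\sigma_S\ge c/(1-\sigma)$. Under the law $\P\{\nu_1=n\}=n^{-\sigma}/Z$ the event $\{n\le\sqrt N\}$ has probability of order $N^{-(1-\sigma)/2}$ (since $\sum_{n\le x}n^{-\sigma}\asymp x^{1-\sigma}/(1-\sigma)$ while $Z\asymp N^{1-\sigma}/(1-\sigma)$), so restricting the double sum to $N/2\le m\le N$ and $n\le\sqrt N$ only yields $\mathrm{Var}(\log\mu_1)\gtrsim(1-\sigma)(\log N)^2N^{-(1-\sigma)/2}$; already at $\sigma=0$ this tends to $0$ as $N\to\infty$, whereas the required bound is a positive constant, so these particular blocks do not give the bound ``uniformly in $\sigma\in[0,1)$''. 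To obtain $\mathrm{Var}(\log\mu_1)\ge c/(1-\sigma)^2$ you should either read it off the Euler--Maclaurin asymptotics when $(1-\sigma)\log N$ is bounded below, or choose blocks at the scale $Ne^{-a/(1-\sigma)}$, whose mass under the weight is $\asymp e^{-a}$, bounded away from $0$, rather than $\sqrt N$. Your explicit caveat that the estimate is only meaningful when $(1-\sigma)\log N$ is bounded below is well taken: since $|\widetilde Y|\le\log N$, one has $\E\widetilde Y^2\le(\log N)^2\ll(1-\sigma)^{-2}$ when $(1-\sigma)\log N\to0$, so the final inequality (which uses $\E\widetilde Y^2\gtrsim(1-\sigma)^{-2}$) tacitly requires the same restriction in the paper's proof as well; in this respect your write-up is the more careful one, but the block argument itself needs the repair indicated above.
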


\section{Proof of Theorem \ref{sk}}
 Our proof is probabilistic.  We introduce a random model and first establish an interesting property (Lemma \ref{var}) of
this one. We don't know whether this model  has been investigated somewhere.
\subsection{A random model}  Let $\s\ge 0$.   Let $N$ be some
positive integer and note $L_N= \sum_{n=1}^{ N} \frac{1}{n^\s}$.  Let $Y$    be random variable
  defined   by 
\begin{equation}\label{iid}\P\{ Y  =  \log n\}=\frac{1}{n^\s L_N},\ n=1,\ldots, N.
 \end{equation}
Let $Y_1, \ldots, Y_k$ be independent copies of $Y$ and note $S_k= Y_1+\ldots+ Y_k$.
\begin{lemma} \label{rm} Let  $\widetilde S_k$ denote  a symmetrization of $S_k$. Then,
\begin{eqnarray*} \Big|\sum_{n=1}^N\frac{1}{n^{\s+it}}\Big|^{2k} &=&
\Big(\sum_{n=1}^N\frac{1}{n^\s }\Big)^{2k}\, \E e^{ it \widetilde S_k}
  .\end{eqnarray*}  
\end{lemma}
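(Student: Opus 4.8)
\textbf{Proof plan for Lemma \ref{rm}.} The plan is to expand the $2k$-th power as a product of $k$ copies of $\sum_n n^{-(\s+it)}$ and $k$ copies of its conjugate $\sum_m m^{-(\s-it)}$, and recognize the resulting multiple sum as a characteristic function of $S_k$ against its independent copy. Concretely, write
\begin{eqnarray*}
\Big|\sum_{n=1}^N\frac{1}{n^{\s+it}}\Big|^{2k}
&=&\Big(\sum_{n=1}^N\frac{1}{n^{\s+it}}\Big)^{k}\Big(\sum_{m=1}^N\frac{1}{m^{\s-it}}\Big)^{k}\\
&=&\sum_{1\le n_1,\ldots,n_k\le N}\ \sum_{1\le m_1,\ldots,m_k\le N}\frac{1}{(n_1\cdots n_k)^{\s}(m_1\cdots m_k)^{\s}}\,e^{-it(\log n_1+\cdots+\log n_k)}e^{it(\log m_1+\cdots+\log m_k)}.
\end{eqnarray*}
Now I factor out $L_N^{2k}=\big(\sum_n n^{-\s}\big)^{2k}$, so that each term carries the weight $\prod_{j}\frac{1}{n_j^{\s}L_N}\cdot\prod_{j}\frac{1}{m_j^{\s}L_N}$, which is exactly the joint probability that $(Y_1,\ldots,Y_k,Y_1',\ldots,Y_k')=(\log n_1,\ldots,\log n_k,\log m_1,\ldots,\log m_k)$ where the $Y_j'$ are a second independent family of copies of $Y$, independent of the first.

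The key identification is then
\begin{eqnarray*}
\Big|\sum_{n=1}^N\frac{1}{n^{\s+it}}\Big|^{2k}
&=&L_N^{2k}\,\E\Big(e^{-it(Y_1+\cdots+Y_k)}e^{it(Y_1'+\cdots+Y_k')}\Big)
= L_N^{2k}\,\E\big(e^{it(S_k'-S_k)}\big),
\end{eqnarray*}
where $S_k'=Y_1'+\cdots+Y_k'$ is an independent copy of $S_k$. By definition, $\widetilde S_k:=S_k'-S_k$ is a symmetrization of $S_k$, and $\E e^{it\widetilde S_k}$ is its characteristic function, giving the claimed formula with $L_N=\sum_{n=1}^N n^{-\s}$.

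There is essentially no obstacle here: the argument is a bookkeeping exercise matching the multinomial expansion of the Dirichlet polynomial power with the law of the random walk $S_k$, together with the elementary fact that $|z|^2=z\bar z$ and that the exponent $\log n$ converts multiplicative structure into additive structure. The only points deserving a word are (i) that the factor $\big(\sum_n n^{-\s}\big)^{2k}$ comes out because the weights $n^{-\s}/L_N$ sum to $1$, making them genuine probabilities, and (ii) that the difference $S_k'-S_k$ of two independent copies is what is meant by "a symmetrization" of $S_k$; one could equally write $\widetilde S_k=\sum_{j=1}^k(Y_j'-Y_j)$ as a sum of $k$ i.i.d. symmetric summands, which will be the convenient form for the subsequent analysis in the proof of Theorem \ref{sk}.
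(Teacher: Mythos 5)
Your proof is correct and follows essentially the same route as the paper: expand $|\cdot|^{2k}$ as the $k$-th power of the sum times the $k$-th power of its conjugate, recognize the normalized weights $n^{-\s}/L_N$ as the law of $Y$, and identify the result as $L_N^{2k}$ times the characteristic function of the symmetrization $\widetilde S_k$. The only cosmetic difference is that the paper groups the multi-indices by the product value $m=n_1\cdots n_k$ (introducing $\d_{k,N}(m)$) and then uses $|\E e^{-itS_k}|^2=\E e^{it\widetilde S_k}$, whereas you keep the two independent families of copies explicit, which is an equivalent bookkeeping of the same identity.
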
  
\begin{proof}
We indeed have\begin{eqnarray*} \P\{S_k=   \log m\}= \sum_{1\le n_1,\ldots, n_k\le  N\atop n_1 \ldots  n_k=m} \P
\big\{Y_1= \log n_1,\ldots,Y_k= \log n_k\big\}\,  
=\frac{ \d_{k,N} (m)}{m^\s L_N^k} 
 \end{eqnarray*} 
where we set $ \d_{k,N} (m)=\#\big\{(n_1,\ldots, n_k)\in \{1,  N\}^k : m= n_1 \ldots  n_k \big\}$. 

 Further
\begin{eqnarray*}\Big|\sum_{n=1}^N\frac{1}{n^{\s+ist}}\Big|^{2k}&=& \Big(\sum_{m=1}^N  {1\over
m^{ \s+ist  } }\Big)^{k} \Big(\sum_{n=1}^N  {1\over
n^{ \s-ist  } }\Big)^{k}\,=\,\Big(\sum_{\m=1}^{N^k}{\d_{k,N} (\m)\over \m^{ \s+ist  } }\Big)\Big(\sum_{\nu=1}^{N^k}{\d_{k,N} (\nu)\over \nu^{ \s-ist  }
}\Big)
\cr &=& L_N^{2k}\Big(\sum_{\m=1}^{N^k}{\P\{S_k=  \log \m\}\over \m^{ ist  } }\Big)\Big(\sum_{\nu=1}^{N^k}{\P\{S_k=   \log \nu\}\over \nu^{-ist  } }\Big)
\cr &=& L_N^{2k}\Big|\sum_{\m=1}^{N^k}{\P\{S_k= \log \m\}\over \m^{ist  } }\Big|^2
\ =\   L_N^{2k}\big|\E e^{-ist S_k}\big|^2 \, =\,  L_N^{2k}\E e^{ ist  \widetilde S_k}.
\end{eqnarray*}
  Hence, 
\begin{eqnarray*}\Big|\sum_{n=1}^N\frac{1}{n^{\s+ist}}\Big|^{2k} &=&
\Big(\sum_{n=1}^N\frac{1}{n^\s }\Big)^{2k}\E e^{ ist \widetilde S_k}.
\end{eqnarray*}
\end{proof}
 \begin{lemma} \label{rm1} We have the relations
\begin{eqnarray*} \frac{1}{T}\int_{-T}^{ T}\big|\sum_{n=1}^N\frac{1}{n^{\s+it}}\big|^{2k} \dd t&=& 
  \big(\sum_{n=1}^N\frac{1}{n^\s}\big)^{2k} \E  \, \frac{ \sin T\widetilde S_k }{T\widetilde S_k}
 \cr \int_\R \big|\sum_{n=1}^N\frac{1}{n^{\s+ist}}\big|^{2k} \frac{\dd t}{\pi( t^2+1)}     &=& \big(\sum_{n=1}^N\frac{1}{n^\s }\big)^{2k}
 \E
\,  e^{ -s|\widetilde S_k|} .\end{eqnarray*}\end{lemma}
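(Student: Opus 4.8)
The plan is to obtain both relations as immediate consequences of Lemma~\ref{rm}, whose proof in fact establishes, for every real $s\ge 0$, the pointwise (in $t$) identity $\big|\sum_{n=1}^N\frac{1}{n^{\s+ist}}\big|^{2k}=\big(\sum_{n=1}^N\frac{1}{n^\s}\big)^{2k}\,\E e^{ist\widetilde S_k}$. It then suffices to integrate this identity against the probability measure $\frac{1}{2T}\chi_{[-T,T]}(t)\,\dd t$ in the first case and against $\frac{\dd t}{\pi(t^2+1)}$ in the second, and in each case to exchange the expectation $\E$ with the integration in $t$. This exchange is licit by Fubini's theorem: the symmetrization $\widetilde S_k$ of $S_k=Y_1+\dots+Y_k$, with $Y_i$ distributed as in \eqref{iid}, takes only finitely many values and is in particular a bounded random variable, so $(\omega,t)\mapsto e^{ist\widetilde S_k(\omega)}$ is bounded and jointly measurable, while both weights are finite measures on their domains.

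For the first relation I would use the elementary Dirichlet-kernel computation
\[
\frac{1}{2T}\int_{-T}^{T}e^{itx}\,\dd t=\frac{\sin (Tx)}{Tx}\qquad(x\in\R),
\]
understood with value $1$ at $x=0$. Applying it with $x=\widetilde S_k$ and then taking expectations gives $\frac{1}{2T}\int_{-T}^{T}\E e^{it\widetilde S_k}\,\dd t=\E\,\frac{\sin(T\widetilde S_k)}{T\widetilde S_k}$; multiplying through by $\big(\sum_{n=1}^N\frac{1}{n^\s}\big)^{2k}$ and invoking Lemma~\ref{rm} (taken at $s=1$) yields the stated formula for the truncated mean-value of $\big|\sum_{n=1}^N\frac{1}{n^{\s+it}}\big|^{2k}$.

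For the second relation I would invoke the Fourier pair already exploited in the proof of Lemma~\ref{debut}, namely $e^{-|\theta|}=\int_\R e^{i\theta t}\,\frac{\dd t}{\pi(t^2+1)}$ for $\theta\in\R$. Taking $\theta=s\widetilde S_k$ and using $s\ge 0$ gives, for each $\omega$, $\int_\R e^{ist\widetilde S_k}\,\frac{\dd t}{\pi(t^2+1)}=e^{-s|\widetilde S_k|}$; integrating in $\omega$ and combining with Lemma~\ref{rm} produces $\int_\R\big|\sum_{n=1}^N\frac{1}{n^{\s+ist}}\big|^{2k}\,\frac{\dd t}{\pi(t^2+1)}=\big(\sum_{n=1}^N\frac{1}{n^\s}\big)^{2k}\,\E e^{-s|\widetilde S_k|}$. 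There is no real obstacle here; the only points deserving a word are the Fubini interchange, which as noted is immediate from the boundedness of $\widetilde S_k$, and the harmless atom $\{\widetilde S_k=0\}$, on which both $x\mapsto\sin(Tx)/(Tx)$ and $x\mapsto e^{-s|x|}$ are continuous with value $1$.
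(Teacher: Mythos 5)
Your proposal is correct and follows essentially the same route as the paper: invoke Lemma \ref{rm}, interchange $\E$ with the $t$-integration by Fubini (harmless since $\widetilde S_k$ takes only finitely many values), and evaluate the inner integrals via the Dirichlet kernel in the first case and the Cauchy--Fourier pair $e^{-|\theta|}=\int_\R e^{i\theta t}\,\frac{\dd t}{\pi(t^2+1)}$ in the second. The only discrepancy is your normalization $\frac{1}{2T}$ against the $\frac{1}{T}$ appearing in the statement --- a factor of $2$ that the paper's own computation also elides --- so this is a harmless slip in the lemma as printed rather than in your argument.
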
  
\begin{proof} By Fubini's theorem, \begin{eqnarray*} \label{rep1}\frac{1}{T}\int_{-T}^{ T}\big|\sum_{n=1}^N\frac{1}{n^{\s+ist}}\big|^{2k} \dd
t\  = \
\big(\sum_{n=1}^N\frac{1}{n^\s }\big)^{2k} \frac{1}{T}\E \int_{-T}^{ T}e^{ ist \widetilde S_k} \dd t
\  = \
  \big(\sum_{n=1}^N\frac{1}{n^\s}\big)^{2k} \E  \, \frac{ \sin sT\widetilde S_k }{ sT\widetilde S_k}
  .
\end{eqnarray*}

It also follows by   integrating that
\begin{eqnarray*} \int_\R \big|\sum_{n=1}^N\frac{1}{n^{\s+ist}}\big|^{2k} \frac{\dd t}{\pi( t^2+1)}   &=&
\big(\sum_{n=1}^N\frac{1}{n^\s }\big)^{2k} \int_\R  \E e^{ ist \widetilde S_k} \frac{\dd t}{\pi( t^2+1)}
\cr   &=&
\big(\sum_{n=1}^N\frac{1}{n^\s }\big)^{2k} \E \int_\R   e^{ ist \widetilde S_k} \frac{\dd t}{\pi( t^2+1)}\cr   &=& \big(\sum_{n=1}^N\frac{1}{n^\s }\big)^{2k}
 \E
\,  e^{ -s|\widetilde S_k|}  .
 \end{eqnarray*}

 A interesting fact of this model is that the  variance of $\widetilde Y$ is small (almost constant). This is made precise in  the  lemma below.
\begin{lemma} \label{var} Let $0<\s<1$. We have 
 \begin{eqnarray*} \E\, \widetilde Y^2=2\Big\{\sum_{m=1}^N \frac{( \log m)^2}{m^\s L_N}- \Big( \sum_{m=1}^N \frac{( \log m)}{m^\s L_N}  \Big)^2\Big\}\ =\ \frac{2}{(1-\s)^2} +  \mathcal O( N^{\s-1}(\log N)^2).\end{eqnarray*}
\end{lemma} 
It will follow from the proof that the almost constant behavior of the variance arises from cancellation of auxiliary  sums. 
\begin{proof} We use Euler-Maclaurin formula. Let
$h:[1,N]\to
\R$ be a twice differentiable function. Then
\begin{equation}\label{EM}
\sum_{k=1}^N h(k)= \int_1^N h(t) dt + \frac 12
(h(1)+h(N)) + \sum_{k=1}^{N-1} \int_0^1 \frac {t-t^2}{2} h''(k+t) dt.
\end{equation}
Applying this to $h(t)=t^\a$,  $-1<\a<0$, we get
 \begin{eqnarray*} 
\sum_{k=1}^N k^\a &= &\frac {N^{\a+1}} {\a+1} +
    \mathcal O(\left( N^\a\right))  + \left(\frac 12- \frac 1{\a+1} \right)
  +  \a(\a-1) \sum_{k=1}^{\infty} \int_0^1
       \frac {t-t^2}{2} (k+t)^{\a-2} dt
    \\
& &      -  \sum_{k=N}^{\infty}  \mathcal O\left(k^{\a-2}\right) \\
&=&  
   \frac{N^{\a+1}}{\a+1} + C_\a +  \mathcal O\left(N^{\a}\right),
\end{eqnarray*}
where
\[C_\a= \frac 12- \frac 1{\a+1} + \a(\a-1)
       \sum_{k=1}^{\infty} \int_0^1
       \frac {t-t^2}{2} (k+t)^{\a-2} dt \ .\]
Thus 
\[ L_N=\frac{N^{1-\s}}{1-\s} + C_{-\s} +  \mathcal O\left(N^{-\s}\right)\]
Apply it now to $h(t)=(\log t) t^{-\s}$. We get
\[\sum_{k=1}^N\frac{\log k}{k^\s}= \frac{N^{1-\s}\log N}{1-\s}- \frac{N^{1-\s}-1}{(1-\s)^2}+  \frac{N^{-\s}\log N}{2}+ C'_\s.\]
Next
\[\sum_{k=1}^N\frac{(\log k)^2}{k^\s}= \frac{N^{1-\s}(\log N)^2}{1-\s}- \frac{2N^{1-\s}(\log N)}{(1-\s)^2}+  \frac{2(N^{1-\s}-1)}{(1-\s)^3}+  \frac{N^{-\s}(\log N)^2}{2}+C'_\s.\]
We moreover have 
\[ \frac{1}{L_N}= \frac{1}{\frac{N^{1-\s}}{1-\s}+C_\s}+\mathcal O(N^{-2+\s}).\]
Therefore,
\begin{eqnarray} \nonumber\sum_{k=1}^N\frac{(\log k)^2}{L_Nk^\s}&=&\Big(\frac{1}{\frac{N^{1-\s}}{1-\s}+C_\s}\Big)\sum_{k=1}^N\frac{(\log k)^2}{k^\s}+ \mathcal O(N^{-1}(\log N)^2).
\end{eqnarray}
Now 
\begin{eqnarray} \nonumber
\Big(\frac{1}{\frac{N^{1-\s}}{1-\s}+C_\s}\Big)\sum_{k=1}^N\frac{(\log k)^2}{k^\s}
 & =&\Big(\frac{1}{\frac{N^{1-\s}}{1-\s}+C_\s}\Big)\Big\{ \frac{N^{1-\s}(\log N)^2}{1-\s}- \frac{2N^{1-\s}(\log N)}{(1-\s)^2}\cr & &\quad+  \frac{2(N^{1-\s}-1)}{(1-\s)^3}+  \frac{N^{-\s}(\log N)^2}{2}+C'_\s\Big\}
 \cr & =&\Big(\frac{1}{1+C_\s N^{\s-1}}\Big)\Big\{ (\log N)^2- \frac{2\log N}{1-\s}+  \frac{2}{(1-\s)^2} \cr & &\ -\frac{2N^{\s-1}}{(1-\s)^2}+  \frac{N^{-1}(\log N)^2}{2}+(1-\s)C'_\s N^{\s-1} \Big\}
 \cr & =&\Big(\frac{1}{1+C_\s N^{\s-1}}\Big)\Big\{ (\log N)^2- \frac{2\log N}{1-\s}+  \frac{2}{(1-\s)^2} \cr & &+\mathcal O( N^{\s-1}) \Big\}.\end{eqnarray}
We have
$$ 1- \frac{1}{1+C_\s N^{\s-1}}= \mathcal O( N^{\s-1}).$$
Therefore
\begin{eqnarray} \nonumber
& &\Big(\frac{1}{\frac{N^{1-\s}}{1-\s}+C_\s}\Big)\sum_{k=1}^N\frac{(\log k)^2}{k^\s}
  \cr  & =&\Big(1+ \mathcal O( N^{\s-1})\Big)\Big\{ (\log N)^2- \frac{2\log N}{1-\s}+  \frac{2}{(1-\s)^2}  +\mathcal O( N^{\s-1})\Big\}
   \cr &=&  (\log N)^2- \frac{2\log N}{1-\s}+  \frac{2}{(1-\s)^2} +  \mathcal O( N^{\s-1}(\log N)^2)+ \mathcal O( N^{2(\s-1)})
    \cr &=&  (\log N)^2- \frac{2\log N}{1-\s}+  \frac{2}{(1-\s)^2} +  \mathcal O( N^{\s-1}(\log N)^2).\end{eqnarray}
By reporting we get
\begin{eqnarray} \nonumber\sum_{k=1}^N\frac{(\log k)^2}{L_Nk^\s}&=&\ (\log N)^2- \frac{2\log N}{1-\s}+  \frac{2}{(1-\s)^2} +  \mathcal O( N^{\s-1}(\log N)^2).
\end{eqnarray}

Similarly, 
\begin{eqnarray} \nonumber\sum_{k=1}^N\frac{\log k}{L_Nk^\s}&=&\Big(\frac{1}{\frac{N^{1-\s}}{1-\s}+C_\s}\Big)\sum_{k=1}^N\frac{\log k}{k^\s}+ \mathcal O(N^{-1}\log N)
\cr &=&\Big(\frac{1}{\frac{N^{1-\s}}{1-\s}+C_\s}\Big)\Big\{ \frac{N^{1-\s}\log N}{1-\s}- \frac{N^{1-\s}-1}{(1-\s)^2}+  \frac{N^{-\s}\log N}{2}+ C'_\s\Big\}\cr & &\quad  +\mathcal O(N^{-1}\log N)
.
\end{eqnarray}
Further
\begin{eqnarray*}& &\Big(\frac{1}{\frac{N^{1-\s}}{1-\s}+C_\s}\Big)\Big\{ \frac{N^{1-\s}\log N}{1-\s}- \frac{N^{1-\s}-1}{(1-\s)^2}+  \frac{N^{-\s}\log N}{2}+ C'_\s\Big\}
\cr &=& \Big(\frac{1}{1+C_\s(1-\s)N^{\s-1}}\Big)\Big\{ \log N- \frac{1}{1-\s}+\frac{N^{\s-1}}{1-\s}+  \frac{N^{-1}\log N}{2}
\cr & & \quad + C'_\s N^{\s-1}(1-\s)\Big\}
\cr &=& \Big(\frac{1}{1+C_\s(1-\s)N^{\s-1}}\Big)\Big\{ \log N- \frac{1}{1-\s}+  \mathcal O( N^{\s-1})\Big\}
\cr &=& \Big(1+\mathcal O( N^{\s-1})\Big)\Big\{ \log N- \frac{1}{1-\s}+  \mathcal O( N^{\s-1})\Big\}
\cr &=& \log N- \frac{1}{1-\s}+ \mathcal O( N^{\s-1}\log N).
\end{eqnarray*}
Consequently,
\begin{eqnarray*} \frac12 \, \E (\widetilde Y)^2&=&\sum_{m=1}^N \frac{( \log m)^2}{m^\s L_N}- \Big( \sum_{m=1}^N \frac{ \log m}{m^\s L_N}  \Big)^2
\cr &=&  (\log N)^2- \frac{2\log N}{1-\s}+  \frac{2}{(1-\s)^2} +  \mathcal O( N^{\s-1}(\log N)^2)\cr & & \quad -\Big(\log N- \frac{1}{1-\s}+ \mathcal O( N^{\s-1}\log N)\Big)^2
\cr &=  &\frac{1}{(1-\s)^2} +  \mathcal O( N^{\s-1}(\log N)^2).\end{eqnarray*}
\end{proof}

\subsection{Proof of Theorem \ref{sk}} 
 It follows from the previous Lemma that
$$s_k^2=\E (\widetilde S_k)^2=k \E (\widetilde Y)^2=\frac{2k}{(1-\s)^2} +  \mathcal O( kN^{\s-1}(\log N)^2).$$
Choose $s=1/s_k$. Let  $g$ be a Gaussian standard random variable. Then,
\begin{eqnarray*} \int_\R \big|\sum_{n=1}^N\frac{1}{n^{\s+i t/s_k}}\big|^{2k} \frac{\dd t}{\pi( t^2+1)}       &=& \big(\sum_{n=1}^N\frac{1}{n^\s
}\big)^{2k}\big\{\E \,  e^{ -  |g|} + \E \,  e^{ - |\widetilde S_k|/s_k}-\E \,   e^{ -  |g|}   \big\} .
 \end{eqnarray*}
Hence
\begin{eqnarray*} & &\Big|\int_\R \big|\sum_{n=1}^N\frac{1}{n^{\s+i t/s_k}}\big|^{2k} \frac{\dd t}{\pi( t^2+1)}  - \big(\sum_{n=1}^N\frac{1}{n^\s
}\big)^{2k} \E \,  e^{ -  |g|}  \Big| \cr &=& \big(\sum_{n=1}^N\frac{1}{n^\s
}\big)^{2k}\big|  \E \,  e^{ - |\widetilde S_k|/s_k}-\E \,   e^{ -  |g|}   \big| .
 \end{eqnarray*}
By   the transfert formula,
\begin{eqnarray*} \big|  \E \,  e^{ - |\widetilde S_k|/s_k}-\E \,   e^{ -  |g|}   \big|   
& =&\Big|\int_0^1  \Big( \P \big\{e^{ - |\widetilde S_k|/s_k}>x\big\}- \P \big\{e^{ -  |g|}>x\big\} \Big)\dd x\Big|
\cr \  (x=e^{- y^2})\quad &=&2  \Big|\int_0^\infty  \Big( \P \big\{|\widetilde S_k|/s_k<y\big\}-\P \big\{| g|<y\big\}\Big)
ye^{- y^2} \dd y\Big|
     \cr &\le   &     4\sup_{x\in \R} \Big|\P
\big\{|\widetilde S_k|/s_k<x\big\} - \P\{|g|<x\} \Big|
 \cr &\le & A\ \frac{\E |\widetilde Y|^3}{k^{1/2}(\E |\widetilde Y|^2)^{3/2}} ,
\end{eqnarray*}
where we used  Berry--Esseen theorem's  in the last inequality,  $A$ being a universal constant. 
Using the plain bound 
$\E |\widetilde Y|^3\le( \log N) \,\E |\widetilde Y|^2$, we therefore deduce
\begin{eqnarray*} & &\Big|\int_\R \big|\sum_{n=1}^N\frac{1}{n^{\s+i t/s_k}}\big|^{2k} \frac{\dd t}{\pi( t^2+1)}  - \big(\sum_{n=1}^N\frac{1}{n^\s
}\big)^{2k} \E \,  e^{ -  |g|}  \Big| \cr &\le  &   A\ \frac{\log N }{k^{1/2}(\E |\widetilde Y|^2)^{1/2}}\big(\sum_{n=1}^N\frac{1}{n^\s
}\big)^{2k}
\ \le \  C\ \frac{ (1-\s)\log N}{k^{1/2}}\Big(\sum_{n=1}^N\frac{1}{n^\s
}\Big)^{2k} .
 \end{eqnarray*}
And $C$ is a universal constant. By taking $c_0= \E \,   e^{ -  |g|}$, this achieves the proof.\end{proof}

 \section{Concluding Remarks.}
The questions treated in \cite{Wi} are also considered in \cite{Wi1} in the setting of  Widom's theory of Toeplitz integral kernels and their connection with finite sections of classical inequalities, such as Carleman or Hilbert's inequality. 
\vskip 2 pt
We believe that these are really interesting and motivating questions, which  should deserve  more investigations, notably because of the connection with Dirichlet sums and the  link with other approaches.
We conclude with a simple remark concerning a second application of \eqref{Wilf} (using the Hilbert kernel $H(x,y)= (x+y)^{-1}$) given  
in \cite{Wi},  where the following formula   in which $\s>1/2$ and  $\l(n)$ is the Liouville function is established,
\begin{equation}\label{l} \sum_{m=1}^\infty \frac{\l(m)}{m^s}\sum_{d|m} \frac{d^{2it}}{d+(m/d)}= \int_0^\infty \Big|\frac{\zeta \big(2(\s+\frac{1}{2})+2i(t+\theta)\big)}{\zeta \big(\s+\frac{1}{2}+i(t+\theta)\big)}\Big|^2\frac{\dd \theta}{\cosh \pi \theta}.
\end{equation}
In fact, the same arguments used to establish \eqref{l} also apply for the kernel $K(x,y)=\max(x,y)^{-1}$, and to other  arithmetical functions. More precisely, let $f(n)$ be a completely multiplicative arithmetical function. Assume that   the series
\begin{equation}\label{f}\sum_{n=1}^\infty \frac{|f(m)|}{m^{\s_0}}
\end{equation}
converges for some  $\s_0>1$. Let $F(z)=\sum_{m=1}^\infty \frac{f(m)}{m^z}$.  Then for $\s\ge \s_0-\frac{1}{2}$, (recalling that $s=\s +it$),
\begin{equation}\label{f1} \sum_{m=1}^\infty \frac{f(m)}{m^s}\Big(\sum_{d|m} \frac{d^{2it}}{\max\big(d, (m/d)\big)}\Big)= \frac{1}{2\pi}\int_\R  \big|F\big(\s + \frac{1}{2}+ i(t+\theta)\big) \big|^2 \frac{\dd \theta }{\frac{1}{4}+  \theta^2}.
\end{equation}
 
Indeed,   by \eqref{Wilf}, 
\begin{equation*} \sum_{n,m=1}^N\frac{f(m)f(n)}{m^{\bar s}n^s} K(m,n)=\frac{1}{2\pi}\int_\R   \Big|\sum_{n=1}^N 
\frac{f(n)}{ n^{\s + \frac{1}{2}+ i(t+\theta)}}   \Big|^2 \frac{\dd \theta }{\frac{1}{4}+  \theta^2}.
\end{equation*}
Now
\begin{equation}\label{f2}\sum_{n,m=1}^N\frac{f(m)f(n)}{m^{\bar s}n^s} K(m,n)=\sum_{\nu=1}^{N^2}\frac{f(\nu)}{\nu^s}\Big( \sum_{d|\nu\atop \frac{\nu}{N}\le d\le N} K(d,\frac{\nu}{d})d^{2it}\Big)
\end{equation}
Further 
\begin{equation*}\Big| \sum_{d|\nu\atop \frac{\nu}{N}\le d\le N} K(d,\frac{\nu}{d})d^{2it}\Big|\le \sum_{d|\nu\atop \frac{\nu}{N}\le d\le N} \frac{1}{\max( d, \nu/d)} =  \frac{1}{\sqrt \nu}\sum_{d|\nu\atop \frac{\nu}{N}\le d\le N} \frac{1}{\max( d/\sqrt \nu, \sqrt \nu/d)} \le  \frac{d(\nu)}{\sqrt \nu},\end{equation*}
where $d(n)$ is the divisor function (counting the number of divisors of the natural $n$), and we recall that $d(n)= \mathcal O_\e (n^\e)$. Hence by assumption \eqref{f}, \eqref{f2} and letting $N$ tend to infinity, the result follows.


 \end{document}